\documentclass[]{amsart}
\usepackage[margin = 3.5 cm,marginpar= 2cm]{geometry}

\usepackage{amsmath,amssymb,mathtools,subcaption,enumitem,mleftright}

\renewcommand{\left}{\mleft}
\renewcommand{\right}{\mright}
\usepackage{hyperref}

\newcommand*{\N}{\mathbb{N}}
\newcommand*{\R}{\mathbb{R}}
\newcommand*{\Dcal}{\mathcal{D}}
\newcommand*{\Mcal}{\mathcal{M}}
\newcommand*{\Ocal}{\mathcal{O}}

\newcommand*{\abs}[1]{\lvert#1\rvert}
\DeclareMathOperator*{\artanh}{artanh}
\newcommand*{\Id}{\mathrm{Id}}
\DeclareMathOperator*{\sgn}{sgn}
\DeclareMathOperator*{\meas}{meas}
\newcommand{\dee}{\mathop{}\!\mathrm{d}}
\newcommand*{\e}{\mathrm{e}}

\newcommand*{\phiv}{\varphi}

\newtheorem{thm}{Theorem}[section]

\newtheorem{prp}[thm]{Proposition}

\theoremstyle{remark}
\newtheorem{rem}{Remark}

\theoremstyle{definition}
\newtheorem{dfn}{Definition}

\title[Stumpons are non-conservative traveling waves]{Stumpons are non-conservative traveling waves of the Camassa--Holm equation} 

\author[S.\ T.\ Galtung]{Sondre Tesdal Galtung}
\address[S.\ T.\ Galtung]{Department of Mathematical Sciences, NTNU -- Norwegian University of Science and Technology, 7491 Trondheim, Norway}
\email{sondre.galtung@ntnu.no}

\author[K.\ Grunert]{Katrin Grunert}
\address[K.\ Grunert]{Department of Mathematical Sciences, NTNU -- Norwegian University of Science and Technology, 7491 Trondheim, Norway}
\email{katrin.grunert@ntnu.no}

\thanks{This work was supported by the Research Council of Norway with grant numbers 250070 {\it Waves and Nonlinear Phenomena (WaNP)} and 286822  {\it Wave Phenomena and Stability --- a Shocking Combination (WaPheS)}.}
\keywords{Camassa--Holm equation, traveling wave solutions, energy-preserving numerical methods}
\subjclass{35C07 \and 35Q51 \and 65M22}

\begin{document}
	
\maketitle

\begin{abstract}
  It is well-known that by requiring solutions of the Camassa--Holm equation to satisfy a particular local conservation law for the energy in the weak sense,
  one obtains what is known as conservative solutions.
  As conservative solutions preserve energy, one might be inclined to think that any solitary traveling wave is conservative.
  However, in this paper we prove that this is not true for the traveling waves known as stumpons.
  We illustrate this result by comparing the stumpon to simulations produced by a recently developed numerical scheme for conservative solutions.
  
\end{abstract}

  \section{Introduction}
The Camassa--Holm equation
\begin{equation}\label{eq:CH}
  u_t - u_{txx} + 2\kappa u_x + 3uu_x - 2u_xu_{xx} - uu_{xxx} = 0,
\end{equation}
where $\kappa \in \R$, has been thoroughly studied since it was brought to attention as a model for shallow-water waves in \cite{CamHol1993}.
It is first known to have appeared as a rather anonymous special case in a family of completely integrable evolution equations in \cite{FucFok1981}.
See also \cite{Johnson2002} and \cite{ConLan2009} for a thorough discussion of how the Camassa--Holm equation
can be derived from the equations of hydrodynamics.

If $u = u(t,x)$ is a solution of \eqref{eq:CH}, then
\begin{equation}\label{eq:CHshift}
  v(t,x) = u(t,x-\alpha t) + \alpha
\end{equation}
is a solution of \eqref{eq:CH} with $\kappa$ replaced by $\kappa-\alpha$. In particular, choosing $\alpha=\kappa$ yields that $v(t,x)$ is a solution of the limiting case $\kappa=0$, i.e., 
\begin{equation}\label{eq:CH0}
  u_t - u_{txx} + 3uu_x - 2u_xu_{xx} - uu_{xxx} = 0.
\end{equation}
Thus we are only going to study weak solutions for \eqref{eq:CH0}, which have bounded traveling wave profiles as initial data. A presentation of all possible weak traveling wave solutions can be found in \cite{Lenells2005}.

The extensive study of the Camassa--Holm equation can be explained by its many interesting mathematical properties.
As mentioned earlier, it is completely integrable, and so it has infinitely many conserved quantities, see, e.g., \cite{Lenells2005cons}.
These invariants take the form of functionals of the type
\begin{equation}\label{eq:invariant}
  \int_\R F(u(t,x),\partial_x u(t,x), \dots, \partial^k_x u(t,x)) \dee x,
\end{equation}
where $F$ is a polynomial in its arguments, and $\partial^k_x$ denotes partial derivatives with respect to $x$ of order $k \in \N$.
Given $m\in \R$, the perhaps most interesting of these conserved quantities is
\begin{equation}\label{eq:energy1}
  \int_\R \left((u-m)^2 +u_x^2 \right)(t,x) \dee x,
\end{equation}
which is well defined for $(u-m)(t,\cdot) \in H^1(\R)$. It is often called the energy and serves as the foundation for interpreting \eqref{eq:CH} as a geodesic equation, see \cite{Kouranbaeva1999,ConKol2003}.
Note that in the literature $m$ is often used to denote the momentum variable, i.e.,  $u-u_{xx}$,
but in this paper we follow \cite{Lenells2005}, where $m$ always denotes a constant appearing in various parameters.

Yet another interesting property, which distinguishes \eqref{eq:CH0} from certain other well-known integrable equations such as the Korteweg--de Vries
equation, is that even smooth initial data can lead to singularity formation, also known as wave breaking, in finite time, cf.\ \cite{ConEsc1998}.
Here the singularity formation corresponds to the slope of $u_x$ becoming unbounded, while $u$ remains bounded,
and so we say that the wave breaks.
This introduces an ambiguity in how to extend the solutions past wave breaking,
which led to the concepts of conservative \cite{BreCon2007cons,HolRay2007cons, GruHolRay2012} and dissipative \cite{BreCon2007diss,HolRay2009diss, GruHolRay2014} solutions. The key element in these works is to rewrite \eqref{eq:CH} in its nonlocal form, i.e.,
\begin{subequations}\label{eq:CHP}
  \begin{align}
    u_t + uu_x + P_x &= 0, \label{eq:CHPa} \\
    P-P_{xx} &= u^2 + \frac12 u_x^2, \label{eq:CHPb}
  \end{align}
\end{subequations}
where we find, using the fundamental solution of the Helmholtz operator, that
\begin{equation}\label{eq:P}
  P(t,x) = \int_\R \frac12\e^{-\abs{x-z}} \left( u^2 + \frac12 u_x^2  \right)\!(t,z)\dee{z}.
\end{equation}
Observe that the exponential kernel in \eqref{eq:P} is associated with decaying solutions of the Helmholtz equation,
but this expression is also well-defined for periodic solutions $u \in H^1_\text{per}(\R)$ due to the rapid decay of the kernel.
Indeed, for periodic $u$ one can rewrite \eqref{eq:P} as an integral over one period with a hyperbolic cosine function
as integration kernel, see Section \ref{s:cons:per}, and this is exactly the periodic framework used in, e.g.,
\cite{ConMcK1999,HolRay2008}.

Using the Lagrangian reformulation, or the method of characteristics, the equation can be further rewritten as a system of ordinary differential equations. Combining this system with yet another equation describing the time evolution of the energy along characteristics yields the different solution concepts. Each of them gives a weak solution to \eqref{eq:CHP}, but the energy is manipulated in different ways when wave breaking occurs and may not be conserved. 
The foundation for the conservative weak solution concept is the following conservation law identity for the energy,
\begin{equation}\label{eq:claw}
  (u^2+u_x^2)_t + (u(u^2+u_x^2))_x = (u^3-2Pu)_x,
\end{equation}
which can be verified to hold pointwise when $u$ is smooth.
For conservative solutions, $u^2 + u_x^2$ represents the absolutely continuous part of a measure,
and we require that this measure satisfies a conservation law which is analogous to \eqref{eq:claw}.

The concept of a conservative solution is essential for our main result,
namely that there exist traveling wave solutions of the Camassa--Holm equation, known as stumpons, which are not conservative, i.e., they do not satisfy \eqref{eq:claw} in the weak sense. This is somewhat surprising, as traveling waves are translations of an initial profile and so they must leave functionals of the form \eqref{eq:invariant} invariant. In particular, the energy \eqref{eq:energy1} is clearly preserved by such solutions.
Hence, despite traveling waves being relatively simple to express in an Eulerian framework, the techniques presented in \cite{BreCon2007cons,HolRay2007cons,GruHolRay2012} cannot be used to study various weak traveling wave solutions. 
In turn, this means that numerical methods which rely on the conservative Lagrangian formulation introduced in the above works cannot reproduce a
stumpon solution.
As a matter of fact, the inability to reproduce the stumpon with the conservative numerical method from \cite{GalGru2021},
based on the discretization in \cite{GalRay2021}, is what led to the
discovery of these results.

We emphasize the important distinction between the notion of a conservative solution used here and in \cite{BreCheZha2015}, and solutions which preserve the energy \eqref{eq:energy1}, which are often also called conservative, especially in the setting of numerical schemes.
Our main results show that the former is a proper subset of the latter.

The rest of the paper is organized as follows:
Section \ref{s:travel} gives an overview of the properties of traveling waves for the Camassa--Holm equation
and presents the most prominent examples of such solutions, i.e., peakons, cuspons, and stumpons.
Section \ref{s:cons:per} defines what it means to be a weak conservative solution and presents the main result: The periodic stumpon, in contrast to the periodic cuspon, is not a weak conservative solution. Furthermore, we investigate why the numerical results obtained by applying the method from \cite{GalGru2021} are correct for small times. Section \ref{s:cons:dec} presents how the periodic results can be extended to the non-periodic case with small modifications.

\section{Traveling waves of the Camassa--Holm equation} \label{s:travel}
Lenells \cite{Lenells2005} classified all bounded traveling waves of the Camassa--Holm equation, and so we will use several of his results in our analysis.
In this section, we will first present some properties which must be satisfied by the traveling waves, and then we give three examples of such
solutions: peakons, cuspons, and stumpons.
Here, the well-known peakon solution is included because of its simple, explicit form which allows for explicit computations.
On the other hand, the cuspon and stumpon solutions play the leading roles in our results.
We emphasize that we here only consider bounded traveling waves.

\subsection{Properties of traveling waves}
A traveling wave solution of \eqref{eq:CH0} is a solution of the form
\begin{equation}\label{eq:tw}
  u(t,x) = \phiv(x-st),
\end{equation}
where the local Sobolev function
$\phiv \in H^1_\text{loc}(\R)$ satisfies the equation
\begin{equation}\label{eq:tw_cond}
  (\phiv')^2 + 3\phiv^2  -2s\phiv = \left( (\phiv-s)^2 \right)'' + a
\end{equation}
in the distributional sense for some constant $a \in \R$, as shown in \cite{Lenells2005}.
Lemma 4 therein states that such a $\phiv$  is a traveling wave of \eqref{eq:CH0} with velocity $s$ if and only if the following properties hold:
\begin{enumerate}[label=(TW\arabic*)]
  \item \label{enum:tw1} There exist disjoint open intervals $E_i$ and a closed set $C$ such that
  $\R\!\setminus\! C = \bigsqcup_{i=1}^\infty E_i$, $\phiv \in C^\infty(E_i)$, $\phiv(x) \neq c$ for $x \in E_i$, while $\phiv(x) = s$ for $x \in C$.
  \item \label{enum:tw2} There is $a, b_i \in \R$ such that $(\phiv')^2 = F(\phiv)$ for $x \in E_i$, $\phiv \to s$ at any finite endpoint of $E_i$, where
  \begin{equation*}
    F(\phiv) = \frac{\phiv^2 (s-\phiv) + a\phiv + b_i}{s-\phiv}
  \end{equation*}
  Furthermore, if the Lebesgue measure of $C$ is strictly positive, i.e., $\meas(C) > 0$, then $a = s^2 $. 
  \item $(s-\phiv)^2 \in W^{2,1}_\text{loc}(\R)$. \label{enum:tw3}
\end{enumerate}
Note that the identity $(\phiv')^2 = F(\phiv)$ in \ref{enum:tw2} can be restated as
\begin{equation}\label{eq:dphi2}
  (\phiv')^2 = \frac{(M-\phiv)(\phiv-m)(\phiv-z)}{s-\phiv},
\end{equation}
where we have factorized
\begin{equation}\label{eq:factor}
  \phiv^2 (s-\phiv) + a\phiv + b_i = (M-\phiv)(\phiv-m)(\phiv-z)
\end{equation}
and $z = s-M-m$.
The three parameters $M$, $s$, and $m$ are exactly those used in \cite[Theorem 1]{Lenells2005} to classify the traveling waves.
As a consequence, one must have
\begin{equation}\label{eq:a:tw}
  a = -Mm -(M+m)(s -M -m).
\end{equation}
For any such traveling wave solution, one readily obtains that the associated $P(t,x)$ in \eqref{eq:P} satisfies $P(t,x) = P(0,x-st)$.
In fact, from \eqref{eq:tw_cond} and \ref{enum:tw1}--\ref{enum:tw3} we can say even more --- but first,
a formal calculation to motivate the result.
Combining \eqref{eq:CHPa}, \eqref{eq:tw}, and the fact that $P_x(t,x) = P_x(0,x-st)$ we obtain
\begin{equation*}
  -s \phiv'(x-st) + \phiv \phiv'(x-st) + P_x(0,x-st) = 0
\end{equation*}
or equivalently
\begin{equation*}
  P_x(0,x) = (s-\phiv(x))\phiv'(x) = -\frac12\left( (s-\phiv(x))^2 \right)'.
\end{equation*}
Combining this expression with \eqref{eq:CHPb} and \eqref{eq:tw_cond} (note that this is only a distributional identity!) it then follows
\begin{align*}
  P(0,x) &= \phiv^2(x) + \frac12 (\phiv'(x))^2  - \frac12 \left( (s-\phiv(x))^2 \right)'' \\
  &= \phiv^2(x) + \frac12 \left[ a - 3\phiv^2(x) + 2s \phiv(x) \right] \\
  &= \frac12 \left[ a + s^2 - \left(s-\phiv(x)\right)^2 \right].
\end{align*}
This formal calculation motivates the following result.
\begin{prp} \label{prp:P}
  For a traveling wave solution \eqref{eq:tw} of the Camassa--Holm equation, satisfying \eqref{eq:tw_cond}, the associated $P(t,x)$ defined in \eqref{eq:P} is given by
  \begin{equation}\label{eq:Ptw}
    P(t,x) = \frac{a + s^2-(s-\phiv(x-st))^2}{2}.
  \end{equation}
\end{prp}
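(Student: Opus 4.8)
The plan is to make the formal calculation preceding the statement rigorous by carefully handling the distributional identities involved. The target formula \eqref{eq:Ptw} is equivalent (after the translation reduction $P(t,x) = P(0,x-st)$, which follows immediately from $P_x(t,x)=P_x(0,x-st)$ and the decay of $P$) to showing that the function
\begin{equation*}
  Q(x) \coloneqq \frac{a + s^2 - (s-\phiv(x))^2}{2}
\end{equation*}
satisfies $Q - Q'' = \phiv^2 + \tfrac12(\phiv')^2$ in the distributional sense, together with $Q \to$ (the appropriate constant) and $Q' \to 0$ at $\pm\infty$ so that $Q$ is the unique bounded solution given by convolution with $\tfrac12\e^{-\abs{\cdot}}$, i.e.\ $Q = P(0,\cdot)$. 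First I would verify the ODE identity: since $\phiv \in H^1_\text{loc}$ and, by \ref{enum:tw3}, $(s-\phiv)^2 \in W^{2,1}_\text{loc}$, the function $Q$ lies in $W^{2,1}_\text{loc}$ and we may compute $Q'' = -\tfrac12((s-\phiv)^2)''$ distributionally. Plugging in \eqref{eq:tw_cond}, which states precisely that $((s-\phiv)^2)'' = (\phiv')^2 + 3\phiv^2 - 2s\phiv - a$, gives $-2Q'' = (\phiv')^2 + 3\phiv^2 - 2s\phiv - a$, and then a direct algebraic manipulation (exactly the one in the displayed motivating computation, read backwards) yields $Q - Q'' = \phiv^2 + \tfrac12(\phiv')^2$, which is \eqref{eq:CHPb} with $P$ replaced by $Q$.

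Next I would address the behavior at infinity, which is what actually pins down $Q = P(0,\cdot)$ rather than $Q$ plus some solution $c_1\e^x + c_2 \e^{-x}$ of the homogeneous Helmholtz equation. Here I would invoke the structure from \ref{enum:tw1}--\ref{enum:tw2}: a bounded traveling wave either is spatially periodic or decays to the constant $s$ at $\pm\infty$ (these are the only options in Lenells' classification for bounded profiles with $\phiv \to s$ at finite endpoints of the $E_i$). In the periodic case, $Q$ is periodic and bounded, hence so is $P(0,\cdot) - Q$, forcing the homogeneous part to vanish; one can alternatively just note that the bounded periodic solution of $w - w'' = g$ with $g$ periodic is unique. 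In the decaying case, $(s-\phiv)^2 \to 0$ and, using $(\phiv')^2 = F(\phiv) \to 0$ as well, one gets $Q(x) \to \tfrac{a+s^2}{2}$ and $Q'(x) = -(s-\phiv(x))\phiv'(x) \to 0$; matching against the known asymptotics of $P(0,\cdot)$ (it converges to $\tfrac12\int u^2 + \tfrac12 u_x^2$-type constants but in fact, since $\phiv\to s$, to $\tfrac{a+s^2}{2}$ via the same identity) again kills the exponential terms. Either way, uniqueness of the bounded solution of the Helmholtz problem gives $P(0,x) = Q(x)$.

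The main obstacle I anticipate is not the algebra but the regularity bookkeeping needed to justify that the distributional computation of $Q''$ via \eqref{eq:tw_cond} is legitimate and that no singular (e.g.\ Dirac) contributions have been dropped — in particular at the boundary of the contact set $C$ where $\phiv = s$, and at points where $\phiv$ hits the branch points $M$ or $m$ and $\phiv'$ may blow up (the cuspon/stumpon behavior). The key point is that \ref{enum:tw3} guarantees $(s-\phiv)^2 \in W^{2,1}_\text{loc}$, so $((s-\phiv)^2)''$ is an honest $L^1_\text{loc}$ function with no singular part, and \eqref{eq:tw_cond} is an equality of such functions (equivalently of distributions); this is exactly the hypothesis that makes the formal calculation rigorous. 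I would also need to confirm that $\phiv^2 + \tfrac12(\phiv')^2 \in L^1_\text{loc}$ so that the right-hand side of \eqref{eq:CHPb} makes sense pointwise, which follows from $\phiv \in H^1_\text{loc}$. Assembling these observations in order — (i) reduce to $t=0$; (ii) check $Q \in W^{2,1}_\text{loc}$ and $Q-Q'' = \phiv^2+\tfrac12(\phiv')^2$ distributionally using \ref{enum:tw3} and \eqref{eq:tw_cond}; (iii) check the asymptotic/periodicity conditions; (iv) conclude by uniqueness of the bounded Helmholtz solution — completes the proof.
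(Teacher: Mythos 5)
Your proposal is correct in substance but takes a genuinely different route from the paper. The paper proves the formula by direct evaluation of the convolution \eqref{eq:P}: it splits $\R$ into $\mathring{C}$, $\partial C$, and the intervals $E_i$, uses $a=s^2$ (from \ref{enum:tw2}) to kill the contribution of $\mathring{C}$ when $\meas(C)>0$, and on each $E_i$ rewrites the integrand as $(s-\phiv)^2-((s-\phiv)^2)''$ and integrates by parts twice against the kernel $\tfrac12\e^{-\abs{x-z}}$; the boundary terms vanish because $\phiv\to s$ at finite endpoints of $E_i$, and the kink of the kernel at $z=x$ produces exactly the residue $(s-\phiv(x))^2$. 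You instead verify that the candidate $Q$ satisfies the Helmholtz equation $Q-Q''=\phiv^2+\tfrac12(\phiv')^2$ distributionally --- which is nothing but \eqref{eq:tw_cond} rearranged, so the case analysis over $C$ is absorbed into the hypothesis --- and then conclude by uniqueness of bounded solutions. Your route is shorter and cleaner on the regularity bookkeeping; the paper's is constructive and makes explicit where each structural hypothesis ($a=s^2$ on $C$, $\phiv\to s$ at finite endpoints) enters. One genuine error in your step (iii), though it is harmless: for decaying profiles the asymptotic value of $\phiv$ is $m$, not $s$ (e.g.\ the peakon has $\phiv\to 0\neq s$), so $(s-\phiv)^2\to(s-m)^2\neq 0$ and $Q$ does \emph{not} tend to $\tfrac{a+s^2}{2}$. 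This does not damage the argument, because you never actually need the limiting values: $Q$ is bounded since $\phiv$ is, $P(0,\cdot)$ is bounded since the convolution of the kernel with the (uniformly locally integrable, nonnegative) right-hand side is, and a bounded distributional solution of $h-h''=0$ vanishes identically. You should simply delete the asymptotic matching and rely on boundedness alone.
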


\begin{proof}
  From the property \ref{enum:tw1} we deduce that $\phiv'(x)$ might not exist on $\partial C$, the boundary of $C$. Therefore, we split $C$ in its interior and boundary, $C = \mathring{C} \sqcup \partial C$, such that if $\mathring{C} \neq \emptyset$ we have $\phiv'(x) = 0$ for $x \in \mathring{C}$.
  Since we are in one dimension, $\partial C$ can at most be a null set, hence $\meas(\partial C) = 0$.
  Then, since $\phiv \in H^1_\text{loc}(\R)$, it follows that the contribution from $\partial C$ to the integral \eqref{eq:P} which defines $P(0,x)$ is zero.
  Consequently,
  \begin{align}\label{eq:Pcalc}
    \begin{aligned}
      P(0,x) &= \int_\R \frac12\e^{-\abs{x-z}} \left( \phiv^2 + \frac12 (\phiv')^2  \right)(z)\dee{z} \\
      &= \frac{a+s^2}{2} + \frac12 \int_\R \frac12\e^{-\abs{x-z}} \left( 2 \phiv^2 + (\phiv')^2  - a-s^2 \right)(z)\dee{z} \\
      &= \frac{a+s^2}{2} + \frac{s^2-a}{2} \int_{\mathring{C}}\frac12 \e^{-\abs{x-z}}\dee{z} \\
      &\quad- \frac12 \sum_{i=1}^{\infty} \int_{E_i} \frac12\e^{-\abs{x-z}} \left(a + s^2 - 2 \phiv^2 - (\phiv')^2  \right)(z)\dee{z},
    \end{aligned}
  \end{align}
  where we have used $\phiv(x) =s$ and $\phiv'(x) = 0$ for $x \in \mathring{C}$.
  For the final expression above we consider two cases:
  If $\meas(C) = 0$, then the second term vanishes. Otherwise, if $\meas(C) > 0$ then we have from \ref{enum:tw2} that $a = s^2 $,
  and so in any case the second term vanishes.
  
  Now we treat each term in the sum, where as $\phiv \in C^\infty(E_i)$ it is perfectly fine to use \eqref{eq:tw_cond} as a pointwise identity for $x \in E_i$, yielding
  \begin{align*}
    I_i(x) &\coloneqq \int_{E_i} \frac12\e^{-\abs{x-z}} \left( a + s^2 - 2 \phiv^2(z) - (\phiv')^2(z)  \right)\dee{z} \\
    &= \int_{E_i} \frac12\e^{-\abs{x-z}} \left( (s-\phiv(z))^2 - \left( (s-\phiv(z))^2 \right)'' \right)\dee{z}.
  \end{align*}
  Here we observe that both the Helmholtz operator $\Id - \partial^2_{x}$ and its fundamental solution $\frac12\e^{-\abs{x-z}}$ appear in the integral.
  We shall therefore integrate by parts and we write $E_i = (e^-_i, e^+_i)$ to simplify the notation below.
  There are two possible cases:
  
  \textit{(i) $x\notin E_i$}:
  We integrate the second term by parts twice to obtain
  \begin{align*}
    I_i(x) &= \left[ \e^{-\abs{x-z}} (s-\phiv(z)) \phiv'(z) \right]_{e^-_i}^{e^+_i} + \left[ \sgn(x-z) \frac12 \e^{-\abs{x-z}} (s-\phiv(z))^2 \right]_{e^-_i}^{e^+_i},
  \end{align*}
  that is, only the boundary terms remain.
  However, we find that these also vanish: From \ref{enum:tw1} and \ref{enum:tw2} it is clear that $\phiv(x)$,
  and thus also $(s-\phiv(x))F(\phiv(x))$, is bounded for $x\in E_i$.
  Now, if the endpoint $e^\pm_i$ is finite, then
  \begin{equation*}
    \abs{(s-\phiv(z))\phiv'(z)} \bigg|_{z=e^\pm_i} = \sqrt{(s-\phiv(z))^2 F(\phiv(z))} \bigg|_{z=e^\pm_i} = 0 = (s-\phiv(z))^2 \bigg|_{z=e^\pm_i}
  \end{equation*}
  due to $\phiv(z) \to s$ as $z \to e^\pm_i$. Otherwise, if the endpoint $e^\pm_i$ is not finite, then the term vanishes because $\e^{-\abs{x-z}} \to 0$ as $z \to e^\pm_i$.
  Hence, $I_i(x) = 0$.
  
  \textit{(ii) $x\in E_i$}:
  In this case we also integrate by parts, but now there is a discontinuity at $x$ when integrating for the second time, leading to the expression
  \begin{align*}
    I_i(x) &= \left[ \e^{-\abs{x-z}} (s-\phiv(z)) \phiv'(z) \right]_{e^-_i}^{e^+_i} + \left[ \frac12 \e^{-\abs{x-z}} (s-\phiv(z))^2 \right]_{e^-_i}^{x}  \\
    &\quad- \left[ \frac12 \e^{-\abs{x-z}} (s-\phiv(z))^2 \right]_{x}^{e^+_i}.
  \end{align*}
  Now, the same argument as in the previous case shows that the contribution from the boundary points $e^\pm_i$ vanishes, while we retain a contribution from $x$, namely $I_i(x) = (s-\phiv(x))^2$.
  
  In conclusion, we have found that $I_i(x) = (s-\phiv(x))^2 \chi_{E_i}(x)$, where $\chi_{E_i}(x)$ is the indicator function of the set $E_i$.
  Using that $\phiv(x) = s$ for $x \in C$ we can add zero in the form of
  $(s-\phiv(x))^2 \chi_C(x)$ to obtain
  \begin{equation*}
    \sum_{i=1}^{\infty} I_i(x) = (s-\phiv(x))^2 \sum_{i=1}^{\infty} \chi_{E_i}(x) + (s-\phiv(x))^2 \chi_C(x) = (s-\phiv(x))^2.
  \end{equation*}
  Finally, inserting this into \eqref{eq:Pcalc}, we have shown
  \begin{equation*}
    P(0,x) = \frac{a+s^2-(s-\phiv(x))^2}{2},
  \end{equation*}
  and the result follows from $P(t,x) = P(0,x-st)$.
\end{proof}

\begin{rem}
  Note that with $P(0,x)$ given by \eqref{eq:Ptw}, the identity \eqref{eq:tw_cond} is exactly \eqref{eq:CHPb} for traveling waves.
\end{rem}

From the aforementioned equivalence between solutions of \eqref{eq:CH} and \eqref{eq:CH0}, we know that if $\phiv$ is a traveling wave profile of \eqref{eq:CH0}, then
\begin{equation}\label{eq:tw0}
  u(t,x) = \phiv(x-(s-\kappa)t) - \kappa
\end{equation}
is a traveling wave solution of \eqref{eq:CH}.
Furthermore, given $\kappa$, one can check that if $\tilde{u}(t,x) = \phiv(x-st)$ is a traveling wave solution of \eqref{eq:CH0} with corresponding $\tilde{P}(t,x)$ given by \eqref{eq:Ptw}, then $P(t,x)$ corresponding to the traveling wave solution
$u(t,x) = \phiv(x-(s-\kappa)t)-\kappa$ of \eqref{eq:CH} satisfies $P(t,x) = \tilde{P}(t,x+\kappa t) - \kappa^2$.

\subsubsection{Characteristics}
The characteristics $y(t,\xi)$ associated with a solution $u(t,x)$ of \eqref{eq:CH} are given by
\begin{equation}\label{eq:char}
  y_t(t,\xi) = u(t,y(t,\xi)), \qquad y(0,\xi) = y_0(\xi).
\end{equation}
These are labeled using the parameter $\xi$ and the initial parametrization $y_0(\xi)$, and can be regarded as the position of a fluid particle labeled
$\xi$ at time $t$.
For the upcoming examples of solitary traveling waves it is illuminating to study their characteristics, as these behave in qualitatively different ways.
Moreover, the characteristics are part of the Lagrangian coordinate system, which will be the foundation for our analysis of the conservative solutions.

\subsection{Peakons}
The name \textit{peakon} comes from the \textit{peaked} crest or trough of these \textit{soliton} solutions of the Camassa--Holm equation.
One of the appealing features of the peakons are their explicit formulas, which are a nice exception among the general traveling waves,
and which allow for exact computations of associated quantities.
An important attribute of the peakons is orbital stability, both in the decaying \cite{ConStr2000} and periodic \cite{Lenells2004} cases; that is, their shape is stable under small perturbations, which allows for the detection of these
distinctive wave profiles.
Another interesting property of peakons is that they can be combined to obtain multi-peakon solutions,
for which the dynamics can be described by a system of ordinary differential equations: see \cite{Camassa2003} for a Hamiltonian system formulation, and \cite{HolRay2007MP} for a global-in-time Lagrangian formulation.
In fact, energy-preserving multi-peakon solutions are examples of completely integrable systems, cf.\ \cite{EckKos2014,EckKos2018}.

Below we will present the peakon solutions both on the full line and in the periodic case.

\subsubsection{Peakon with decay}
According to the classification in \cite{Lenells2005}, the peakons with decay for \eqref{eq:CH0} correspond to $z =-m=m=0$ and $s = M$ with either 
\begin{equation}\tag{d} 0 < s, \quad \max\limits_{x\in\R} \phiv(x) = s, \quad  \text{and} \quad \lim\limits_{x\to\pm\infty}\phiv(x) \searrow 0 \enspace  \text{exponentially},
\end{equation}
or 
\begin{equation*}\tag{d'}
  s<0, \quad \min\limits_{x\in\R} \phiv(x) = s, \quad \text{and} \quad \lim\limits_{x\to\pm\infty}\phiv(x) \nearrow 0 \enspace \text{exponentially}.
\end{equation*}
These parameter values reduce \eqref{eq:dphi2} to $(\phiv')^2 = \phiv^2$.
Moreover, the explicit expression
\begin{equation}\label{eq:peakon_dec}
  u(t,x) =  s \e^{-\abs{x-x_0-st}},
\end{equation}
is given in \cite[Equation (8.9)]{Lenells2005}, see also \cite{CamHol1993}.

Let us for simplicity assume that we are in the case where $s > 0$ such that the peak located at $x=x_0$ at time $t=0$ is a crest for the wave.
From \eqref{eq:peakon_dec} we can compute the exact characteristics.
Fixing $\xi$ and defining $w(t) = y(t,\xi)-st-x_0$,  \eqref{eq:char} reads \begin{equation*}
  w'(t) = s \left( \e^{-\abs{w(t)}} - 1 \right), \qquad z(0) = y_0(\xi) - x_0,
\end{equation*}
which has a Lipschitz-continuous right-hand side and hence its solutions are unique.
Noting that $w = 0$ is a constant solution, and that $w'(t) < 0$ whenever $w(t) \neq 0$, it is sufficient to consider the three cases $w(0) > 0$, $w(0) = 0$, and $w(0) < 0$.
In particular, we obtain 
\begin{equation*}
  w(t) = \sgn(w(0)) \ln\left(1+\left(\e^{\abs{w(0)}}-1 \right)\e^{-\sgn(w(0))st} \right),
\end{equation*}
where we use the convention $\sgn{0} = 0$.
Note that the peak follows a single characteristic, which acts as an asymptote for the characteristics located to the left and the right:
\begin{equation*}
  \lim\limits_{t\to\infty} y(t,\xi) - (x_0 + st) = \begin{cases}
    0, & y_0(\xi) > x_0 \\ -\infty, & y_0(\xi) < x_0.
  \end{cases}
\end{equation*}

\begin{rem}
  As a check of the formula \eqref{eq:Ptw}, we can consider a peakon with decay for \eqref{eq:CH0}, for which $\phiv(x) = s \e^{-\abs{x-x_0}}$. Then we can explicitly compute $P(0,x)$ from \eqref{eq:P},
  and we obtain exactly $P(0,x) = s^2(1-(1-\e^{-\abs{x-x_0}})^2)/2$ as given by \eqref{eq:Ptw}.
\end{rem}

\subsubsection{Periodic peakon}
In analogy with the decaying peakons, the periodic peakons correspond to two different cases in the classification in \cite{Lenells2005}.
Here $s = M$ and $z=-m$ with either
\begin{equation}\tag{c}
  0< \min\limits_{x\in\R} \phiv(x) = m < s = \max\limits_{x\in\R}\phiv(x)
\end{equation} 
or 
\begin{equation}\tag{c'}  0 >\max\limits_{x\in\R} \phiv(x) = m > s = \min\limits_{x\in\R}\phiv(x).
\end{equation}
Then \eqref{eq:dphi2} becomes $(\phiv')^2 = (\phiv-m)(\phiv-z)$. Moreover, the solution, see \cite{Lenells2005}, is periodic with period $2L$, where 
\begin{equation*}
  L= 2\ln\left(\frac{\sqrt{\vert s-m\vert }+\sqrt{\vert s+m\vert }}{\sqrt{2\vert m\vert}}\right)
\end{equation*}
and
\begin{equation}
  u(t,x) = \frac{s }{\cosh\left(L\right)} \cosh(x-st-x_{0}), \qquad |x-st-x_{0}| \le L.
  \label{eq:peakon_per}
\end{equation}
Note that for periodic peakons we have the identity $s = m \cosh(L)$, and so we could have equally well expressed
\eqref{eq:peakon_per} using $s$ and $m$.

Again, let us for simplicity assume that we are in the case $s > m>0$, such that $x_{0} \in \R$ gives the initial position of a trough of $u$. If we fix $\xi$ and define $w(t) = y(t,\xi)-st-x_0$, we can combine \eqref{eq:char} and \eqref{eq:peakon_per}
to obtain the differential equation
\begin{equation*}
  w'(t) = s \left( \frac{\cosh(w(t))}{\cosh(L)} -1  \right), \qquad w(0) = y_0(\xi) - x_0,
\end{equation*}
which has a Lipschitz-continuous right-hand side. Noting that $w=\pm L$ is a constant solution and that $w'(t)<0$ whenever $|w(t)|<L$, it is sufficient to consider the cases $w(0)=\pm L$ and $|w(0)|<L$. In particular, we obtain 
\begin{equation}
  \label{eq:peakon_per_char}
  \begin{aligned}
    w(t) &= - 2\artanh\left(\tanh\left(\frac{L}{2}\right) \tanh\left(\frac{st}{2}\tanh(L)-\artanh\left(\frac{\tanh\left(\frac{w(0)}{2}\right)}{\tanh\left(\frac{L}{2}\right)}\right)\right)\right)
  \end{aligned}
\end{equation}
which holds for $|w(0)| \le L$.
When translating this into the characteristics $y(t,\xi)$,
we must use the periodicity condition $y(t,\xi+2L) = y(t,\xi) + 2L$ outside this interval.
Note again that each peak follows a characteristic, which acts as asymptote for the characteristics in between:  
\begin{equation}
  \lim\limits_{t \to \pm \infty} (y(t,\xi) - st) = x_{0} \mp L, \qquad |y_0(\xi)-x_{0}| < L.
  \label{eq:peakon_per_asymp}
\end{equation}
This clustering behavior is demonstrated in Figure \ref{fig:peakon} for $y_0(\xi) = \xi$.
\begin{figure}
  \centering
  \begin{subfigure}[t]{0.495\textwidth}
    \includegraphics[width=\textwidth]{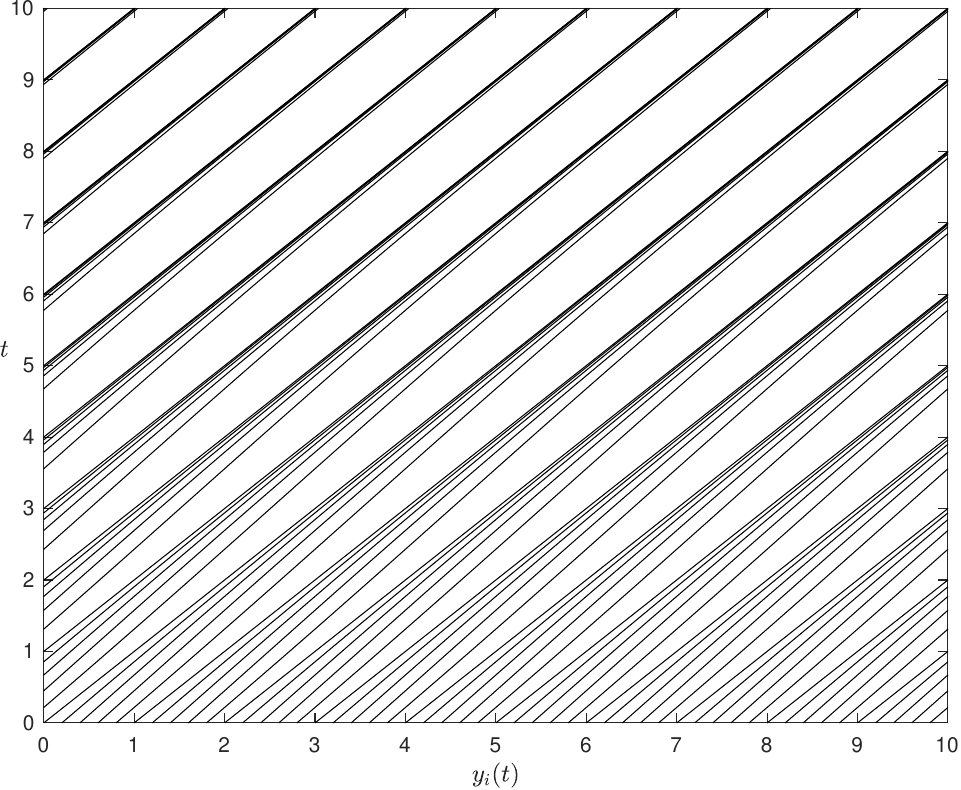}
    \caption{}
  \end{subfigure}
  \begin{subfigure}[t]{0.495\textwidth}
    \includegraphics[width=\textwidth]{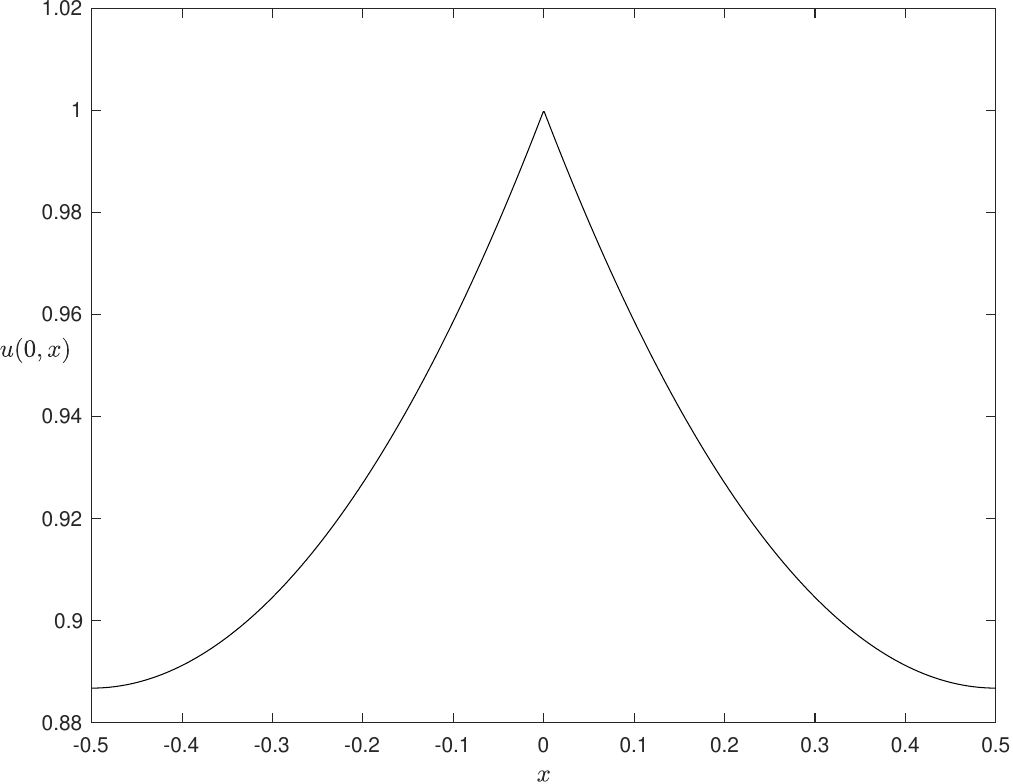}
    \caption{}
  \end{subfigure}
  \caption{Periodic peakon with parameters $x_0 = 0.5$, $s = 1$ and $2L = 1$, or alternatively $m = 1/\cosh(0.5)$. (\textsc{a}) The characteristics given by \eqref{eq:peakon_per_char} for $0 \le t \le 10$ and $\xi \in \{j/5\}_{j=-50}^{50}$. (\textsc{b}) The unitial profile for the periodic peakon given by \eqref{eq:peakon_per}.}
  \label{fig:peakon}
\end{figure}

\vspace{0.2cm}
As we have seen, the characteristics of peakons travel with a speed which is at most $s$, the speed of the peakon profile, and each peak follows a single characteristic with speed $s$.
Moreover, the characteristics of peakons never meet in finite time, which in particular means that there is no wave breaking.

\subsection{Cuspons}
The name {\it cuspon} comes from its {\it cusp}-shaped crest or trough, which, unlike the peakons, have an unbounded slope.
Indeed, in \cite{Lenells2005} it is shown that if the cusp is located in $x_c$, one has $\phiv(x) -\phiv(x_c) = \Ocal(\abs{x-x_c}^{2/3})$ as $x \to x_c$.
In analogy with the peakon, there are cuspons with decay as well as periodic ones.
\subsubsection{Cuspon with decay}
In the classification in \cite{Lenells2005}, cuspons with decay have parameters $z+m = 2m=s-M$ and either 
\begin{equation}\tag{f}
  m < s < M, \enspace m = \inf\limits_{x\in\R} \phiv(x), \enspace s = \max\limits_{x\in\R}\phiv(x), \enspace \phiv(x) \searrow m \text{ exponentially as } x \to \pm\infty,
\end{equation}
or 
\begin{equation}\tag{f'}
  m > s > M, \enspace m = \sup\limits_{x\in\R} \phiv(x), \enspace s = \min\limits_{x\in\R}\phiv(x), \enspace \phiv(x) \nearrow m \text{ exponentially as } x \to \pm\infty.
\end{equation}
In the following we denote cuspons by $\phi$ (instead of $\phiv$) and accordingly, \eqref{eq:dphi2} becomes
\begin{equation}\label{eq:dphi2Cdec}
  \phi_x^2 = \frac{(M-\phi) (\phi-m)^2}{s-\phi}.
\end{equation}
For simplicity we assume from now on that $s>m$ and since we are free to translate the cuspon as we wish, we take $\phi$ to have its cusp at the origin. It then follows that $\phi$ satisfies
\begin{subequations}\label{eq:phi_prop_dec}
  \begin{align}
    \phi(-x) &= \phi(x), \\
    \phi(0) &= s, \\
    \phi_x(x) &< 0 \enspace \text{for} \enspace x > 0, \\
    \lim\limits_{x\to\infty} \phi(x) &= m.
  \end{align}
\end{subequations}
Here we see that for any $\phi$ defined through \eqref{eq:dphi2Cdec} and \eqref{eq:phi_prop_dec} we have $m \le \phi(x) \le s$ for $x \in \R$.
Furthermore, $\phi_x(x)$ is finite for $x \neq 0$ and
\begin{equation}\label{eq:phi_break}
  \lim\limits_{x \to 0\mp} \phi_x(x) = \pm\infty \quad \text{and} \quad \lim\limits_{x\to0} \phi_x^2(x) = \infty.
\end{equation}
Hence, $u(t,x) = \phi(x-st)$ satisfies \eqref{eq:CH0} and has a well-defined
$u_x(t,x)$ for $x \neq st$ and
\begin{equation*}\label{eq:wb}
  \lim\limits_{x\to st}u_x^2(t,x) = \infty, \qquad t \in \R.
\end{equation*}
That is, for any time $t$, this solution features wave breaking at $x =st$.

These cuspons are studied in \cite{Grunert2016}, where it is found that, unlike the peak of a peakon, the cusp of the cuspon does
not follow any single characteristic.
Instead, the cusp moves faster than any of the characteristics to its left and right.
\subsubsection{Periodic cuspon}
In the classification in \cite{Lenells2005}, the periodic cuspons have parameters satisfying $z+m=s-M$ and either
\begin{equation}\tag{e}
  z < m < s < M, \enspace m = \min\limits_{x\in\R} \phiv(x), \enspace s = \max\limits_{x\in\R}\phiv(x),
\end{equation}
or 
\begin{equation}\tag{e'}
  z > m > s > M, \enspace m = \max\limits_{x\in\R} \phiv(x), \enspace s = \min\limits_{x\in\R}\phiv(x).
\end{equation}
Recall from before that we denote cuspons by $\phi$, so that in this case, \eqref{eq:dphi2} becomes
\begin{equation}\label{eq:dphi2Cper}
  \phi_x^2 = \frac{(M-\phi) (\phi-m)(\phi-z)}{s-\phi}.
\end{equation}
Assuming that the value $m$ is attained at $x=x_0$, the period $2L$ can be computed using the following relation,
which is derived from \eqref{eq:dphi2Cper}:
\begin{equation}\label{eq:L:cusp}
  \begin{aligned}
    L&=\int_{x_0}^{x_0+L} \sgn(\phi_x(x))\frac{\phi_x(x)\sqrt{\vert s-\phi(x)\vert}}{\sqrt{\vert M-\phi(x)\vert}\sqrt{\vert\phi(x)-m\vert}\sqrt{\vert \phi(x)-z\vert}}dx\\
    &=\sgn(s-m) \int_m^s \frac{\sqrt{\vert s-y\vert}}{\sqrt{\vert M-y\vert}\sqrt{\vert y-m\vert}\sqrt{\vert y-z\vert}}dy.
  \end{aligned}
\end{equation}

Let $2L$ be the period of the cuspon and assume from now on, for simplicity, that $s>m$. Then we can center the cusp at the origin to obtain the following properties for the $2L$-periodic function $\phi$ on the interval $[-L,L]$:
\begin{subequations}\label{eq:phi_prop_per}
  \begin{align}
    \phi(-x) &= \phi(x), \\
    \phi(0) &= s, \\
    \phi_x(x) &< 0 \enspace \text{for} \enspace 0 < x < L, \\
    \phi(L) &= m, \enspace \phi_x(L) = 0,
  \end{align}
\end{subequations}
where $ \phi_x(\pm L) = 0$ follows from property \ref{enum:tw1} since $\phi \in C^{\infty}$ away from the cusps.

\subsection{Stumpons}
The name \textit{stumpon} comes from the {\it stump}-shaped crests or troughs of these traveling waves.
In a sense, these solutions are constructed from cuspons: as stated in \ref{enum:tw2}, if one requires the parameter $a$ in \eqref{eq:tw_cond}
to satisfy
\begin{equation}\label{eq:a:stumpon}
  a = s^2 ,
\end{equation}
one can join cuspons at their cusps with a horizontal plateau of elevation $s$ equal to their velocity.
This produces a weak solution of \eqref{eq:CH0} with velocity $s$.
As cuspons are either decaying or periodic, this carries over to the stumpons as well.

\subsubsection{Stumpon with decay} 
Let the parameters $M$, $s$, and $m$ be as for the cuspons with decay, but with the additional constraint $a = s^2$.
In particular, assume that \eqref{eq:a:stumpon} holds for the cuspon $\phi$ given through  \eqref{eq:dphi2Cdec} and \eqref{eq:phi_prop_dec}, and define
\begin{equation}\label{eq:psi}
  \psi(x) = \begin{cases}
    \phi(\abs{x}-\ell), & \abs{x} \ge \ell, \\
    s, & \abs{x} < \ell
  \end{cases}
\end{equation}
for some $\ell > 0$. Then
\begin{equation}\label{eq:stumpon}
  u(t,x) = \psi(x-st)
\end{equation}
is a stumpon solution of \eqref{eq:CH0} with ``stumpwidth'' $2\ell$.
It follows from the considerations made for the cuspon solution that
$u(t,x)$ in \eqref{eq:stumpon} has a well-defined $u_x(t,x)$ for $x \neq st \pm \ell$, and for any time $t$ the solution features wave breaking at $x = st \pm \ell$.
Based on \eqref{eq:phi_prop_dec} one easily derives the following properties for $\psi$ in \eqref{eq:psi}:
\begin{subequations}\label{eq:psi_prop_dec}
  \begin{align}
    \psi(-x) &= \psi(x), \\
    \psi(x) &= s \enspace \text{for} \enspace \abs{x} \le \ell, \\
    \psi_x(x) &=  0 \enspace \text{for} \enspace x \in (0,\ell), \quad \psi_x(x) < 0 \enspace \text{for} \enspace x \in (\ell,\infty), \\
    \lim\limits_{x\to\infty} \psi(x) &= m.
  \end{align}
\end{subequations}

\subsubsection{Periodic stumpon}
In analogy with the previous section we now let the parameters $M$, $s$, $m$, and $z$ be as for the periodic cuspons, but again require $a = s^2$.
Let $\phi$ be the periodic cuspon given through \eqref{eq:dphi2Cper} and \eqref{eq:phi_prop_per}, and define the $2(L+\ell)$-periodic function $\psi$ by
\begin{equation}\label{eq:psiper}
  \psi(x) = \begin{cases}
    \phi(\abs{x}-\ell), & \ell\le\abs{x} \le \ell+L, \\
    s, & \abs{x} < \ell.
  \end{cases}
\end{equation}
Then 
\begin{equation}\label{eq:stumpon_per}
  u(t,x)=\psi(x-st)
\end{equation}
is a $2(L+\ell)$-periodic stumpon of \eqref{eq:CH0} with ``stumpwidth'' $2\ell$.
The corresponding periodic version of \eqref{eq:psi_prop_dec} is then given by
\begin{subequations}\label{eq:psi_prop_per}
  \begin{align}
    \psi(-x) &= \psi(x), \\
    \psi(x) &= s \enspace \text{for} \enspace \abs{x} \le \ell, \\
    \psi_x(x) &=  0 \enspace \text{for} \enspace x \in (0,\ell), \quad \psi_x(x) < 0 \enspace \text{for} \enspace x \in (\ell,L+\ell), \\
    \psi(L+\ell) &= m, \enspace \psi_x(L+\ell) = 0.
  \end{align}
\end{subequations}

\section{Conservative and non-conservative, periodic traveling waves}\label{s:cons:per}

The possibility of wave breaking for solutions of the Camassa--Holm equation introduces the non-uniqueness of weak solutions $u$, because one may remove some of the concentrated energy, and still retain a weak solution. However, one can construct a solution, which preserves the energy with respect to time, by imposing an additional constraint in form of an additional equation which has to be satisfied in the weak sense by the energy $\mu$. In the case of a
$2L$-periodic solution this system reads 
\begin{subequations}\label{eq:CHcons}
  \begin{align}
    u_t+uu_x& =-P_x \label{eq:CHcons:u} \\
    \mu_t+(u\mu)_x& = (u^3-mu^2+m^2u-2P(u-m))_x \label{eq:CHcons:mu} \\
    & =((u-m)^3+2m(u-m)^2+2m^2(u-m)+m^3-2P(u-m))_x, \notag
  \end{align}
\end{subequations}
where
\begin{equation}\label{eq:CHcons:P}
  P(t,x)=\frac14 \int_\R \e^{-\vert x-z\vert} (u^2+2mu-m^2)(t,z) \dee z +\frac14 \int_\R \e^{-\vert x-z\vert} \dee \mu(t,z).
\end{equation}
Thus every $2L$-periodic, conservative solution is given through a pair $(u,\mu)$ such that $(u(t), \mu(t))\in \Dcal$ for all $t\in \R$, where 
\begin{equation}\label{eq:D}
  \Dcal \coloneqq \left\{ (u,\mu) \mid u \in H^1_\text{per}(\R), \enspace \mu \in \Mcal^+_\text{per}, \enspace \mu_\text{ac} = ((u-m)^2+ u_x^2) \dee{x} \right\}.
\end{equation}
Here $m$ denotes a constant, $\Mcal^+_\text{per}$ the set of positive and periodic Radon measures on $\R$,
and $\mu_\text{ac}$ the absolutely continuous part of a measure $\mu$.
Note that we can apply the periodicity to replace the integral over the real line using the kernel $\frac12 e^{-\abs{x-z}}$ in \eqref{eq:CHcons:P}
with an integral over one period $2L$ using the periodized kernel, that is
\begin{equation*}
  P(t,x)=\frac14 \int_{-L}^L \frac{\cosh(\abs{x-z}-L)}{\sinh(L)} (u^2+2mu-m^2)(t,z) \dee z +\frac14 \int_\R \frac{\cosh(\abs{x-z}-L)}{\sinh(L)} \dee \mu(t,z),
\end{equation*}
cf. \cite[Equation (16)]{GalGru2021}.
This is in line with the kernel used in, e.g., \cite{ConMcK1999, HolRay2008}.

We elaborate a bit on the system \eqref{eq:CHcons}--\eqref{eq:CHcons:P}. First, we see that \eqref{eq:CHcons:u} is simply \eqref{eq:CHPa}.
Then, \eqref{eq:CHcons:P} is based on \eqref{eq:P}, but rewritten in order to incorporate the measure $\mu = \mu(t,x)$ from
\eqref{eq:D}.
The evolution equation \eqref{eq:CHcons:mu} is formally equivalent to \eqref{eq:claw} by replacing $\mu$ with $(u-m)^2 + u_x^2$
and using \eqref{eq:CHcons:u}.
We emphasize that in the case $\mu = \mu_\text{ac}$ then \eqref{eq:CHcons:P} reduces to \eqref{eq:P},
while \eqref{eq:CHcons:u} and \eqref{eq:CHcons:mu} is equivalent to \eqref{eq:CHPa} and \eqref{eq:claw}.
As for the constant $m$, it is not strictly necessary to introduce it here in the periodic setting,
and thereby ``complicate'' the equations somewhat by adding more terms.
However, by doing so we will alleviate the transition from studying the periodic case to the decaying case in
Section \ref{s:cons:dec}, as the proofs can be reiterated with only minimal changes.
There $m$ will represent the asymptotic value of the wave profiles $\phi$ and $\psi$, cf.\ Section \ref{s:travel}, and this ensures that $\mu_\text{ac}(\R)$
as defined in \eqref{eq:D} is bounded.
For the current periodic setting, we do not have this issue, as we only consider the energy
contained in a single period.

In the case of the real line, it has been proved that to every initial pair $(u_0,\mu_0)$ there exists a unique conservative solution and it can be derived using the method of characteristics, see \cite{BreCheZha2015}. Since this result has not been proven yet in the periodic case, we aim at showing that the periodic stumpon is not a weak, conservative solution, in the sense that it cannot be obtained by applying the method of characteristics to \eqref{eq:CHcons}. For an in-detail description of associating to every periodic initial data a conservative solution with the help of the method of characteristics, we refer to \cite{HolRay2008}.

Thereafter, we will investigate whether or not the numerical method developed in \cite{GalRay2021} and \cite{GalGru2021} approximates the weak, conservative solution with stumpon initial data as described by the method of characteristics in \cite{HolRay2008}.

\begin{thm}\label{thm:noncon}
  Periodic stumpons, unlike periodic cuspons, are not conservative solutions of the Camassa--Holm equation, since they do not satisfy the conservation law \eqref{eq:claw} in the weak sense.  
\end{thm}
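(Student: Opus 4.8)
The plan is to insert the traveling-wave ansatz into \eqref{eq:claw} and use the translation structure to reduce the weak conservation law to a single one-dimensional identity. Write $u(t,x)=\psi(x-st)$ for the periodic stumpon of \eqref{eq:stumpon_per}--\eqref{eq:psiper}. By Proposition \ref{prp:P} and the stumpon constraint $a=s^2$ from \eqref{eq:a:stumpon}, the associated $P$ equals $\tilde P(x-st)$ with $\tilde P=s^2-\tfrac12(s-\psi)^2$, so every density in \eqref{eq:claw} is a function of $x-st$ alone and both sides of \eqref{eq:claw} are distributions on $\R_t\times\R_x$ that are invariant under the flow $(t,x)\mapsto(t+\tau,x+s\tau)$. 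Testing against products $\rho(t)\chi(x-st)$ with $\rho\in C_c^\infty(\R)$ (so $\int_\R\rho'(t)\dee t=0$) and $\chi$ a $2(L+\ell)$-periodic test function, one reduces \eqref{eq:claw} to the assertion that
\[
  D(x):=(\psi-s)\bigl(\psi^2+\psi_x^2\bigr)-\bigl(2\psi^3-2s\psi^2-s^2\psi\bigr)
\]
is constant almost everywhere on $\R$; here the subtracted term equals $\psi^3-2\tilde P\psi$ rewritten, a polynomial in $\psi$, and $D\in L^1_{\mathrm{loc}}(\R)$ because $\psi$ is continuous and $\psi_x^2\sim\abs{x\mp\ell}^{-2/3}$ is integrable near the plateau edges.

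The next step is to examine $D$ near a plateau edge, say $x=\ell$. On the plateau $\abs{x}<\ell$ one has $\psi\equiv s$, so $(\psi-s)(\psi^2+\psi_x^2)\equiv0$ there; on the cuspon side $x>\ell$ one has $\psi(x)=\phi(x-\ell)$ with $\phi$ the periodic cuspon satisfying \eqref{eq:dphi2Cper}, and that identity together with $\phi(0)=s$ and $s-z=M+m$ (recall $z=s-M-m$) gives
\[
  (\psi-s)\bigl(\psi^2+\psi_x^2\bigr)=(\phi-s)\phi^2-(M-\phi)(\phi-m)(\phi-z)\;\longrightarrow\;-(M-s)(s-m)(M+m)=:-K
\]
as $x\to\ell^+$. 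Since $\psi$ is continuous, the polynomial part of $D$ is continuous across $x=\ell$, so $D$ jumps by exactly $K$ there; and for both admissible parameter ranges of periodic cuspons, $z<m<s<M$ and $z>m>s>M$, each of the three factors of $K$ has a fixed sign, so $K\ne0$. Hence $D$ is not a.e.\ constant and \eqref{eq:claw} fails in the weak sense. In fact, since $\psi$ is a classical solution of \eqref{eq:CH0} on the plateau interiors and on the cuspon arcs --- where \eqref{eq:claw} holds pointwise --- and the only jumps of $D$ are these $\pm K$ at the plateau edges, the distributional defect is exactly
\[
  (u^2+u_x^2)_t+\bigl(u(u^2+u_x^2)-u^3+2Pu\bigr)_x=K\sum_{k\in\Z}\bigl(\delta_{x=st-\ell+2k(L+\ell)}-\delta_{x=st+\ell+2k(L+\ell)}\bigr),
\]
a train of Dirac masses $\pm K$ carried along the plateau edges, which travel at the profile speed $s$ but --- as with the cusp of a cuspon --- do not follow characteristics. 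This measure has zero total mass per period, consistent with the conserved energy \eqref{eq:energy1}, yet it is nonzero as a distribution; this is the phenomenon announced in the abstract.

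The same scheme explains the contrast with periodic cuspons: there $a$ need not equal $s^2$ and there is no plateau, so near a cusp $\psi$ is a cuspon on \emph{both} sides, $(\phi-s)(\phi^2+\phi_x^2)$ tends to the same value $-K$ from left and right, the analogous $D$ is continuous at the cusp, and being locally constant on the smooth arcs it is globally constant --- so periodic cuspons do satisfy \eqref{eq:claw} weakly. The hard part will be the first step: proving rigorously that a distribution on the plane invariant in the direction $(1,s)$ is determined by its trace transverse to the flow, and, hand in hand with this, controlling the unbounded density $\psi_x^2$ near the plateau edges so that the integrations by parts and the one-sided limits of $(\psi-s)(\psi^2+\psi_x^2)$ are justified. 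Once that is in place, what remains is the short algebraic computation above, via \eqref{eq:dphi2Cper} and Proposition \ref{prp:P}.
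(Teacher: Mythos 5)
Your argument is correct and rests on exactly the same mechanism as the paper's proof: a Rankine--Hugoniot computation at the gluing lines $x=st\mp\ell$, where the jump of the flux combination $g-sf$ (with $f=u^2+u_x^2$ and $g=u(u^2+u_x^2)-u^3+2Pu$) equals $(M-s)(s-m)(s-z)\neq 0$ because $u_x^2(u-s)\to -(M-\phi)(\phi-m)(\phi-z)\big|_{\phi=s}$ on the cuspon side while it vanishes on the plateau, with $u\to s$ and $P\to s^2$ supplying the rest. The technical packaging differs: the paper localizes in the $(t,x)$-plane around a point of $\gamma_l$ and applies Green's theorem on $\varepsilon$-neighborhoods $D_1^\varepsilon, D_2^\varepsilon$, whereas you factor out the translation invariance first and reduce \eqref{eq:claw} to the one-dimensional statement that $D=g-sf$ is a.e.\ constant. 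Your route buys a little more: it exhibits the distributional defect explicitly as a periodic train of Dirac lines of strength $\pm K$ with zero net mass per period, which nicely reconciles the failure of \eqref{eq:claw} with the preservation of the total energy \eqref{eq:energy1}. Two small remarks. First, the step you flag as ``the hard part'' is in fact routine: with $f,g\in L^1_{\mathrm{loc}}$ (which your estimate $\psi_x^2\sim\abs{x\mp\ell}^{-2/3}$ guarantees), the substitution $\zeta=x-st$ and the identity $\Phi_t+s\Phi_x=\frac{d}{dt}\Phi(t,\zeta+st)$ give $\langle T,\Phi\rangle=\langle (g-sf)',\tilde\Phi\rangle$ with $\tilde\Phi(\zeta)=\int_\R\Phi(t,\zeta+st)\dee t$, and every one-dimensional test function arises as such a $\tilde\Phi$; no trace theory is needed. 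Second, your displayed $D$ uses $2\tilde P\psi=s^2\psi+2s\psi^2-\psi^3$, which presupposes $a=s^2$; for the cuspon comparison the correct analogue has $a\psi$ in place of $s^2\psi$, and then \eqref{eq:factor} gives $D=-b$ on each arc, constant across the cusp --- you say this implicitly (``the analogous $D$''), but it is worth making explicit since the contrast between cuspon and stumpon is part of the statement.
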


\begin{proof}
  \begin{figure}
    \centering
    \includegraphics[width=0.5\textwidth]{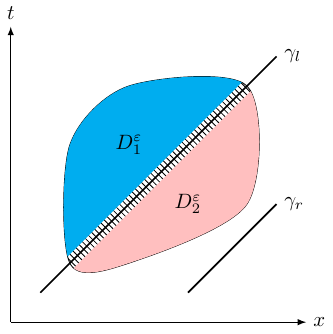}
    \caption{The leftmost ``gluing line'' $\gamma_l$ divides the region $D$ in two parts, $D_1$ and $D_2$. For each $i =1,2$ and a given $\varepsilon > 0$ we have an open region $D_i^\varepsilon \subset D_i$ with distance $\varepsilon$ to $\gamma_l$, where the solution is smooth.}
    \label{fig:RH}
  \end{figure}
  The periodic stumpon with period $2(L+\ell)$ is given by \eqref{eq:stumpon_per} and hence there are two types of gluing lines. The ones where the plateau is to the right and the ones where the plateau is to the left. Since the stumpon is periodic it suffices to study the gluing lines starting inside the interval $[-L-\ell, L+\ell]$. These are the gluing lines 
  \begin{equation}
    \gamma_l \colon t\mapsto -\ell+st \quad \text{ and } \quad \gamma_r \colon t\mapsto \ell+st.
  \end{equation}
  Based on an argument inspired by the derivation of the Rankine--Hugoniot condition for conservation laws we aim to show that any stumpon does not satisfy \eqref{eq:claw} in the weak sense. 
  
  Given a point on $\gamma_l$, which we denote $(\bar t,\gamma_l(\bar t))$, choose a neighborhood $D$ around $(\bar t, \gamma_l(\bar t))$, which does not contain parts of $\gamma_r$ nor any other gluing line. Furthermore divide $D$ into three non-intersecting parts $D=D_1\cup \gamma_l\vert_D\cup D_2$, as indicated in Figure~\ref{fig:RH}, and let 
  \begin{equation*}
    D_i^\varepsilon=\{(t,x)\in D_i\mid \text{dist}((t,x),\gamma_l)>\varepsilon\} \quad \text{ for } i \in \{1,2\}.
  \end{equation*}
  Pick any test function $\Phi(t,x)$ whose support lies entirely inside $D$. If the stumpon would satisfy \eqref{eq:claw}, then 
  \begin{align*}
    0&=\iint_D ((u^2+u_x^2)\Phi_t+(u(u^2+u_x^2)-u^3+2Pu)\Phi_x)(t,x)\dee x \dee t \\
    & = \lim_{\varepsilon\to 0}\iint_{D_1^\varepsilon} ((u^2+u_x^2)\Phi_t+(u(u^2+u_x^2)-u^3+2Pu)\Phi_x)(t,x)\dee x \dee t\\
    & \quad +\lim_{\varepsilon\to 0}\iint_{D_2^\varepsilon} ((u^2+u_x^2)\Phi_t+(u(u^2+u_x^2)-u^3+2Pu)\Phi_x)(t,x)\dee x \dee t.
  \end{align*}
  By assumption, for any $\varepsilon>0$ the function $u$ is smooth in $D_1^\varepsilon\cup D_2^\varepsilon$. In particular, one has that \eqref{eq:claw} holds for every point $(t,x)$ in $D_1^\varepsilon\cup D_2^\varepsilon$.
  Let us introduce the notation 
  \begin{equation*}
    \gamma_i^\varepsilon =\{ (t,x)\in D_i\mid \text{dist}((t,x),\gamma_l)=\varepsilon\}
  \end{equation*}
  and 
  \begin{equation*}
    I_i^\varepsilon=\{t\in \mathbb{\R}\mid (t, \gamma_i(t))\in D_i^\varepsilon\}, \qquad I_l = \{t\in \mathbb{\R}\mid (t, \gamma_l(t))\in D\}. 
  \end{equation*}
  We can thus write
  \begin{align*}
    &  \iint_{D_1^\varepsilon} ((u^2+u_x^2)\Phi_t+(u(u^2+u_x^2)-u^3+2Pu)\Phi_x)(t,x)\dee x \dee t\\
    &  \qquad = \iint_{D_1^\varepsilon} (((u^2+u_x^2)\Phi)_t+((u(u^2+u_x^2)-u^3+2Pu)\Phi)_x)(t,x)\dee x \dee t\\
    &  \qquad =\int_{\partial D_1^\varepsilon} -(u^2+u_x^2)\Phi \dee x +(u(u^2+u_x^2)-u^3+2Pu)\Phi \dee t\\
    & \qquad = \int_{I_1^\varepsilon} ((u^2+u_x^2)(u-s)-u^3+2Pu)\Phi (t,\gamma_1^\varepsilon(t)) \dee t\\
    & \qquad = \int_{I_1^\varepsilon} (u_x^2(u-s)-u^2s+2Pu)\Phi (t,\gamma_1^\varepsilon(t))\dee t,
  \end{align*}
  where we have used Green's theorem, $(\gamma_1^\varepsilon)'(t)=s$, and that $\Phi$ vanishes on the part of the boundary given by
  $\partial D_i^\varepsilon \setminus \gamma_i^\varepsilon$.
  
  Note that $\gamma_1^\varepsilon(t) = \gamma_l(t) -\varepsilon\sqrt{1+s^2}$ for $t \in I_1^\varepsilon$, and similarly
  $\gamma_2^\varepsilon(t) = \gamma_l(t) +\varepsilon\sqrt{1+s^2}$ for $t \in I_2^\varepsilon$.
  From \eqref{eq:dphi2} it follows that $u(t,x)=\psi(x-st)$ satisfies
  \begin{equation*}
    u_x^2(u-s) = 
    (\psi')^2(\psi-s) =-(M- \psi)(\psi-m)(\psi-z).
  \end{equation*}
  on $D_1^\varepsilon$.
  Moreover, recall that $\psi = s$ on the plateau, such that the continuity of $u(t,x)$ and \eqref{eq:Ptw} yield $u(t,\gamma_l(t)) = s$, $P(t,\gamma_l(t)) = s^2$.
  Thus we end up with
  \begin{align*}
    &  \lim_{\varepsilon \to 0}\iint_{D_1^\varepsilon} ((u^2+u_x^2)\Phi_t+(u(u^2+u_x^2)-u^3+2Pu)\Phi_x)(t,x)\dee x \dee t\\ 
    & \qquad =\left( -(M-s)(s-m)(s-z) +s^3 \right) \int_{I_l} \Phi(t, \gamma_l(t))\dee t.
  \end{align*}
  Following the same lines for the integral over $D_2^\varepsilon$ and recalling that $u(t,x)=s$ inside $D_2^\varepsilon$, we obtain 
  \begin{equation*}
    \lim_{\varepsilon \to 0}\iint_{D_2^\varepsilon} ((u^2+u_x^2)\Phi_t+(u(u^2+u_x^2)-u^3+2Pu)\Phi_x)(t,x)\dee x \dee t=-s^3 \int_{I_l} \Phi(t, \gamma_l(t))\dee t,
  \end{equation*}
  where the negative sign comes from traversing $\gamma_l$ in the opposite direction from before.
  Combining the above expressions, we find that for \eqref{eq:claw} to hold weakly, we must have
  \begin{equation*}
    0= (M-s)(s-m)(s-z)\int_{I_l}\Phi(t, \gamma_l(t))\dee t.
  \end{equation*}
  Recall that the above equality must hold for any test function $\Phi(t,x)$ whose support lies entirely inside $D$, and hence we must have $(M-s)(s-m)(s-z)=0$, which contradicts the assumptions on the parameters $m$, $s$, and $M$
  for periodic cuspons.
  A completely analogous argument shows that the same issue arises around the gluing line $\gamma_r$.
\end{proof}

Thus, a stumpon is not a weak, conservative traveling wave solution in the sense of \cite{HolRay2008}. One therefore expects that any numerical method based on the method of characteristics introduced therein will not yield the traveling stumpon as a weak, conservative solution with stumpon initial data. On the other hand, carrying out the above analysis for a cusp, one obtains that those satisfy \eqref{eq:claw} and therefore they can be approximated using numerical methods based on \cite{HolRay2008}.

Next, we illustrate Theorem~\ref{thm:noncon} by applying the numerical method presented in \cite{GalGru2021}, which again is based on the discretization in Lagrangian variables derived in \cite{GalRay2021}.
In fact, it was these resulting figures which led us to investigate the stumpon in detail, as they did not produce the expected result, namely the stumpon solution.

For the cuspon we have chosen the parameter values $m = 0$, $s = 1$, and $M = (1+\sqrt{5})/2$,
and plugging these values into \eqref{eq:a:tw} verifies that \eqref{eq:a:stumpon} is satisfied.
Using the method presented in \cite{KalLen2005} we have computed a reference cuspon with period $2L_\phi \approx 2.969$.
To avoid overloading the illustrations of the characteristics we have chosen to discretize with $N = 512$ discrete labels,
and we have run the scheme until $T = 4L_\phi$, that is, two periods.
Moreover, for simplicity we have chosen the initial labeling $y(0,\xi) = \xi$, which is strictly speaking not a valid relabeling function for the Lagrangian variables.
However, since the singularity at the cusp is a single point, and we are interpolating by following initially equally spaced characteristics $y(0,\xi_i)$, 
this is permissible for the discretization due to relabeling.
\begin{figure}
  \centering
  \begin{subfigure}[t]{0.495\textwidth}
    \includegraphics[width=\textwidth]{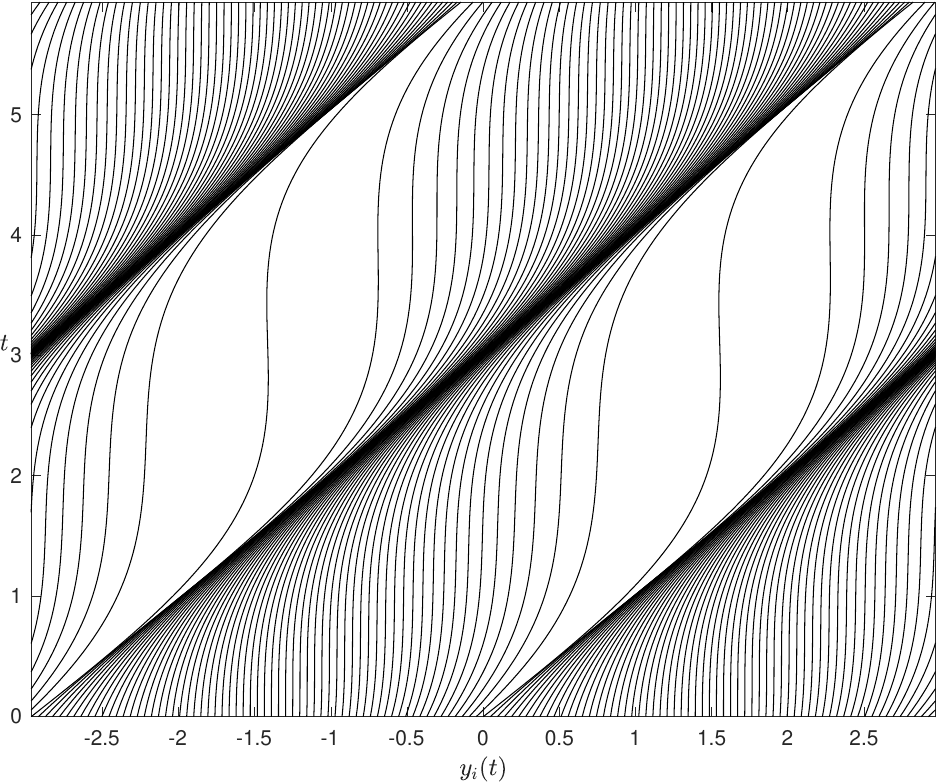}
    \caption{}
  \end{subfigure}
  \begin{subfigure}[t]{0.495\textwidth}
    \includegraphics[width=\textwidth]{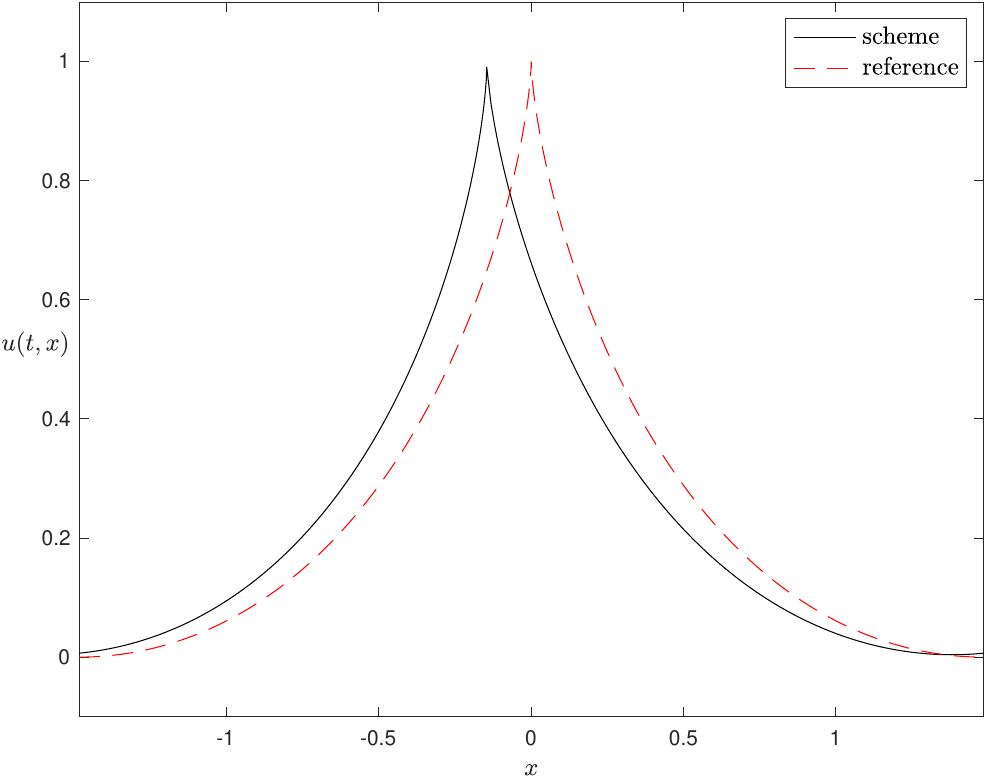}
    \caption{}
  \end{subfigure}
  \caption{Numerical results for cuspon initial data with $ m = 0$, $s = 1$, $M = (1+\sqrt{5})/2$, and period $2L_\phi \approx 2.969$. (\textsc{a}) Numerically computed $y(t,\xi)$ for $y(0,\xi) = \xi$, showing 64 of the total 512 characteristics. (\textsc{b}) The numerical and reference solutions at $T = 4L_\phi$, i.e., two periods.}
  \label{fig:cuspon}
\end{figure}
The characteristics displayed in Figure \ref{fig:cuspon} illustrate nicely how the cusp,
located along the line where the characteristics are most dense, moves faster than the characteristics.
This is quite contrary to the characteristics of the periodic peakon in Figure \ref{fig:peakon},
where the characteristics cluster in front of the peaks.
Figure \ref{fig:cuspon} also shows the numerical and reference solution at time $T = 4L_\phi$,
where we see that the numerical solution manages to uphold a nicely cusped profile.
The computed solution is lagging slightly behind the reference solution, but this phase error is reduced by increasing the number of discrete labels $N$.

The parameters for the cuspon have been chosen to satisfy the constraint $a = s^2$, which means that we can add a plateau of height $s = 1$ and length $4-2L_\phi$ at the cusp to obtain a valid stumpon. Adding this plateau, we obtain a stumpon with period $2L_\psi=4$,
for which we run our numerical scheme with the same discretization parameters $N = 512$ and $T = 4L_\psi = 8$.
\begin{figure}
  \centering
  \begin{subfigure}[t]{0.495\textwidth}
    \includegraphics[width=\textwidth]{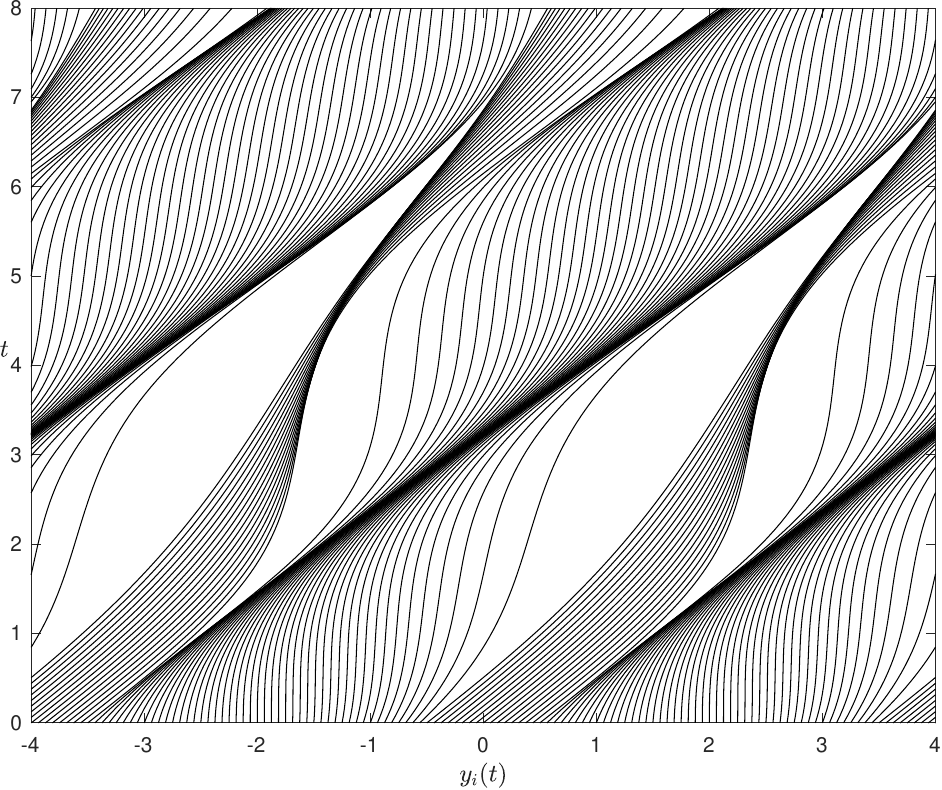}
    \caption{}
  \end{subfigure}
  \begin{subfigure}[t]{0.495\textwidth}
    \includegraphics[width=\textwidth]{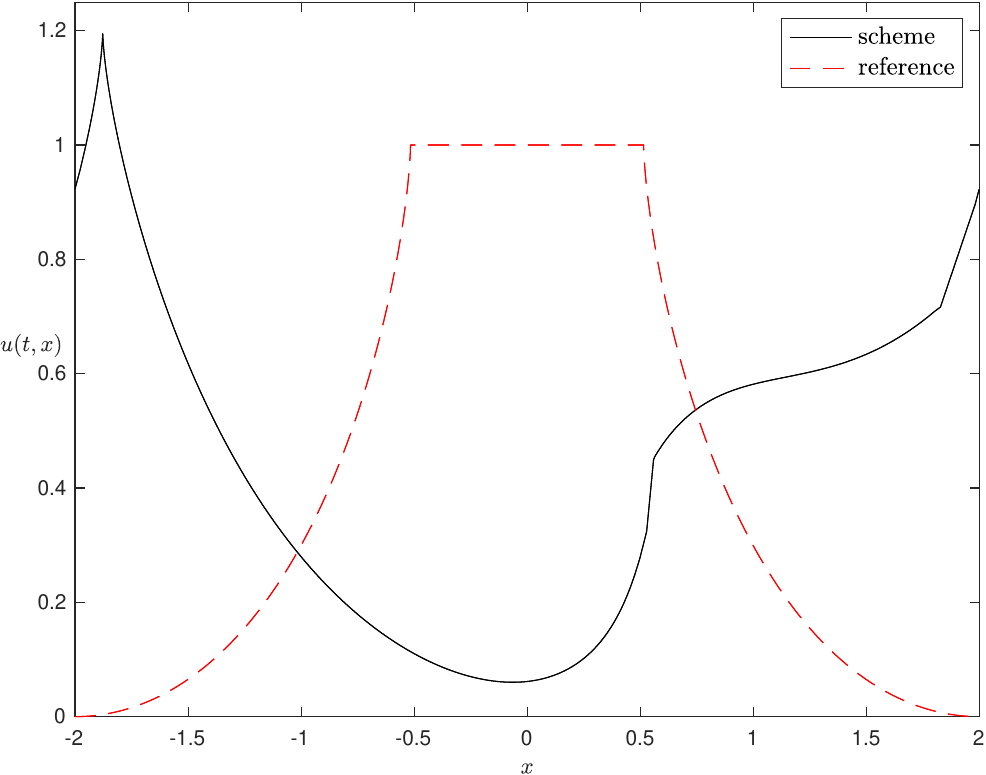}
    \caption{}
  \end{subfigure}
  \caption{Numerical results for stumpon inital data with $m = 0$, $s = 1$, $M = (1+\sqrt{5})/2$, and period $2L_\psi = 4$. (\textsc{a}) Numerically computed $y(t,\xi)$ for $y(0,\xi) = \xi$, showing 64 of the total 512 characteristics. (\textsc{b}) The numerical and reference solutions at $T = 8$, i.e., two periods.}
  \label{fig:stumpon1}
\end{figure}
This produces a very different result to that of the previous example, where the initial wave profile is not preserved at all.
Indeed, we observe that a cusped crest emerges from the front of the stump, leaving the rest of it behind.
On the other hand, the rest of the plateau falls down and collapses into a temporary cusped trough,
which then is overtaken by the crest at $t \approx 7$.
This behavior is also suggested by the associated characteristics displayed in Figure \ref{fig:stumpon1},
where the densely packed lines indicate the presence of a cusp.
From the slope of the densely packed characteristics we also see that the crest has a speed slightly greater than 1 before colliding with the trough.
Figure \ref{fig:stumpon1} also shows the numerical and reference solution at time $T = 8$,
where the reference stumpon has returned to its initial position, while the taller crest has just reemerged from the impact with the trough.

At this stage, there can only be two possible explanations for this, in our opinion, unexpected behavior:
Either there is a deficiency in the numerical method, leading to an ill-behaved approximation of the conservative solution. Alternatively, the numerical method produces a good approximation, which however contradicts our intuition. We therefore take a closer look at what we believe is the most prominent, yet unexpected feature of the numerical solution for small $t>0$: The ``collapse'' of the plateau. This is illustrated in Figure \ref{fig:stumpon2}, which shows the initial stumpon profile at $t = 0$ and compares the numerical and reference solutions
at $t = T/2$ and $t = T$ for $T = 4-2L_\phi$.
\begin{figure}
  \centering
  \includegraphics[width=0.75\textwidth]{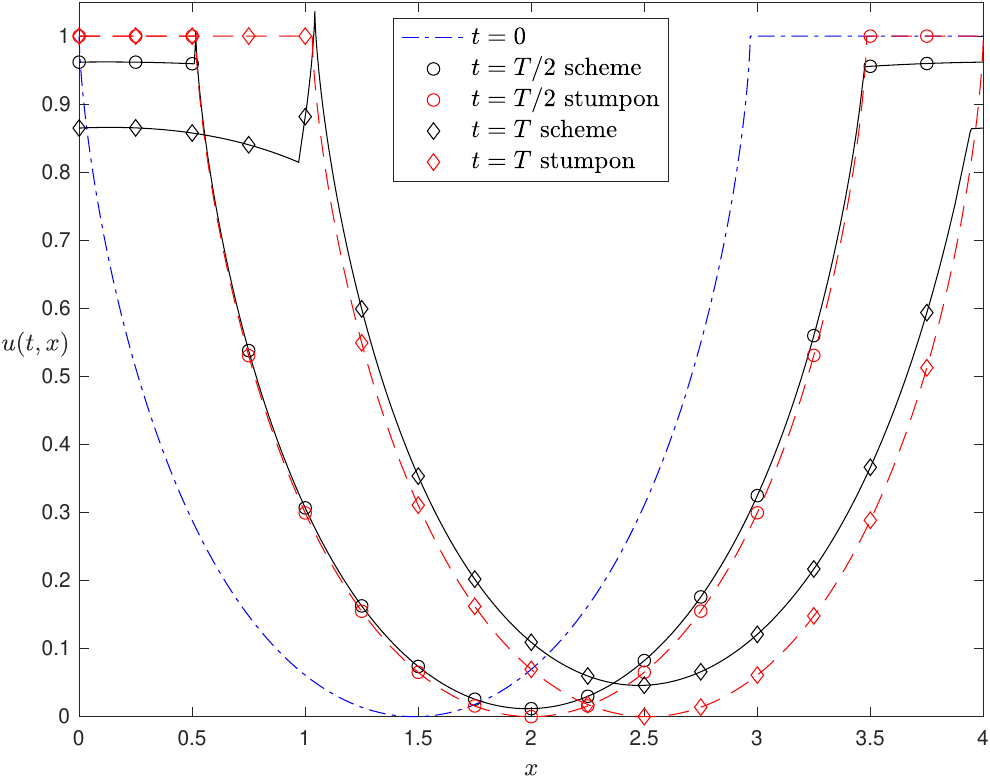}
  \caption{A closer look at the numerical solution from Figure \ref{fig:stumpon1}. Here the initial profile is shifted a distance $2-L_\phi$ to the left, and the reference and numerical solutions are shown at the times $t = 0$, $t = T/2$ and $t = T$ for $T = 4-2L_\phi$.}
  \label{fig:stumpon2}
\end{figure}
Here we clearly see how the plateau falls from the very beginning, while the cusped crest emerges to the right of the stump.
In order to justify these numerical results, we will prove the following result.
\begin{prp}\label{prp:platfall}
  Let $\psi$ be the periodic wave profile of a traveling stumpon with a plateau of positive width. Then, for the conservative solution of the Camassa--Holm equation with initial data $u_0(x) = \psi(x)$, the plateau will not persist.
\end{prp}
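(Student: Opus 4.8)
The plan is to argue by contradiction: assume the plateau persists for short times and then show that the conservative solution violates the conservation law \eqref{eq:claw}---equivalently \eqref{eq:CHcons:mu}---across one of the gluing lines, by the same Rankine--Hugoniot bookkeeping as in the proof of Theorem~\ref{thm:noncon}. First I would record the initial data in the Lagrangian, i.e.\ method-of-characteristics, framework of \cite{HolRay2008} used to define the conservative solution. Since the cusps of a cuspon satisfy $\phi(x)-\phi(x_c)=\Ocal(|x-x_c|^{2/3})$ we have $\phi_x^2=\Ocal(|x-x_c|^{-2/3})$, which is locally integrable; hence the initial energy $\mu_0=((\psi-m)^2+\psi_x^2)\,\dee x$ is purely absolutely continuous, with no atom at the would-be cusps $x=\pm\ell$. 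Centring the profile at the origin, write $\gamma_l(t)=-\ell+st$ and $\gamma_r(t)=\ell+st$ for the two gluing lines, as in Theorem~\ref{thm:noncon}, and suppose there were $\tau>0$ together with nondegenerate intervals $[\alpha(t),\beta(t)]$, $t\in[0,\tau)$, with $\alpha(0)=-\ell$, $\beta(0)=\ell$, on which $u(t,\cdot)\equiv s$.

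On such an interval $u_x\equiv 0$, so no wave breaking occurs in its interior and $\mu$ restricted there coincides with its absolutely continuous part; moreover \eqref{eq:CHcons:u} with $u_t=u_x=0$ forces $P_x\equiv 0$, so $P$ is constant in $x$ on the plateau, and applying $1-\partial_x^2$ to \eqref{eq:CHcons:P} (which gives $P-P_{xx}=\tfrac12(u^2+2mu-m^2)+\tfrac12\,\dee\mu/\dee x$) pins down $P\equiv s^2$ on the plateau, exactly as for the traveling stumpon, cf.\ Proposition~\ref{prp:P}. The decisive step is to control the solution just on the cuspon side of, say, $\gamma_l$: one needs that for small $t$ the energy $\mu$ remains absolutely continuous in a neighbourhood of $\gamma_l(t)$---so that no atom forms at the gluing line---and that the quantity $u_x^2(u-s)$, which by \eqref{eq:dphi2} equals $(\psi')^2(\psi-s)=-(M-\psi)(\psi-m)(\psi-z)$ and hence remains bounded and continuous up to $x=\pm\ell$ even though $u_x^2$ itself blows up there, retains the limit $-(M-s)(s-m)(s-z)$ as $x\to\gamma_l(t)^-$. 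I expect this to follow from uniqueness of the conservative solution (which equals the stumpon at $t=0$) together with the fact that, $\mu_0$ having no atom near $\gamma_l$, the characteristics issuing from the ascending cuspon arc cannot concentrate energy in finite time; alternatively one argues directly with the Lagrangian ODEs of \cite{HolRay2008}, noting that the edge label $\xi_0$ starts with $y_\xi(0,\xi_0)=0$ and checking that $\{\xi:y_\xi(t,\xi)=0\}$ remains a single point for short times.

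Granting this, the Green's theorem/Rankine--Hugoniot computation from the proof of Theorem~\ref{thm:noncon} applies essentially verbatim to the pair $(u,\mu)$ and the law \eqref{eq:CHcons:mu}, which, since $\mu$ is absolutely continuous on both sides of $\gamma_l$ and \eqref{eq:CHcons:u} holds, has the same defect across $\gamma_l$ as \eqref{eq:claw}: the plateau side contributes $-s^3\int_{I_l}\Phi(t,\gamma_l(t))\,\dee t$ (from $u=s$, $u_x=0$, $P=s^2$), the cuspon side contributes $\bigl(s^3-(M-s)(s-m)(s-z)\bigr)\int_{I_l}\Phi(t,\gamma_l(t))\,\dee t$, and validity of the conservation law therefore forces $(M-s)(s-m)(s-z)=0$, contradicting the strict ordering $z<m<s<M$ of the parameters of a periodic cuspon. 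Hence the plateau cannot persist. The main obstacle is precisely the middle step---ruling out spurious concentration of energy at a gluing line for small $t$ and identifying the cuspon-side trace of $u_x^2(u-s)$---because the plateau side is completely rigid and, once the cuspon side is pinned down, the Rankine--Hugoniot bookkeeping is identical to that of Theorem~\ref{thm:noncon}.
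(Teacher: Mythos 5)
Your strategy is genuinely different from the paper's: you argue by contradiction and try to rerun the Rankine--Hugoniot computation of Theorem~\ref{thm:noncon} along the (assumed persisting) plateau edge, whereas the paper works directly in the Lagrangian system \eqref{eq:evol:per}, computes $Q_{\psi,t}(0,\xi)$ on the plateau explicitly via the shift and symmetry identities relating the stumpon to the underlying cuspon, and reads off from $U_\psi(t,\xi)=s-\tfrac{t^2}{2}Q_{\psi,t}(0,\xi)+\Ocal(t^3)$ with $Q_{\psi,t}(0,\xi)\neq 0$ that the plateau moves instantly. Your route, however, has a genuine gap at precisely the step you flag as the main obstacle, and it is not a removable technicality. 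The contradiction hypothesis only asserts $u\equiv s$ on some intervals $[\alpha(t),\beta(t)]$; it determines nothing about the solution on the cuspon side for $t>0$. The identity $u_x^2(u-s)=-(M-\psi)(\psi-m)(\psi-z)$ rests on \eqref{eq:dphi2}, which is a property of traveling-wave profiles, and for $t>0$ the conservative solution is not a traveling wave --- that is the content of the proposition. So there is no reason the cuspon-side trace of $u_x^2(u-s)$ along the gluing curve equals $-(M-s)(s-m)(s-z)$, or is even nonzero, for a positive-measure set of times; and you need it on a set of positive measure, since the jump enters the weak formulation only through $\int_{I_l}[\,\cdot\,]\Phi(t,\gamma_l(t))\dee t$. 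Neither uniqueness of the conservative solution nor absence of energy concentration near the gluing line pins down this trace: a priori the cusp at the plateau edge could be instantaneously regularized, making $u_x$ bounded there, forcing the trace to zero (since $u\to s$ there by continuity), and reducing your Rankine--Hugoniot bookkeeping to $0=0$ with no contradiction. Determining the local structure of the conservative solution at the plateau edge for $t>0$ is exactly the work the paper's Lagrangian computation performs, so the gap cannot be closed without essentially redoing that computation.

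A secondary problem is that, under your hypothesis, the discontinuity curve is $t\mapsto\alpha(t)$ with unknown speed $\alpha'(t)$ rather than $\gamma_l$ with speed $s$: the factor $(u-s)$ in the jump must then be replaced by $(u-\alpha'(t))$, and since $u_x^2$ blows up at a cusp while $u-\alpha'(t)$ need not vanish there, the one-sided traces may fail to be finite, so the computation is not ``essentially verbatim'' even granting the trace identity.
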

In particular, the plateau will descend for the case $z < m < s < M$, while it ascends for $z > m > s > M$.

To prove Proposition~\ref{prp:platfall} and to understand the numerical results obtained for the stumpon initial data, we need to introduce the method of characteristics from \cite{HolRay2008}.

Due to the possible concentration of energy at time $t=0$, for some general initial data $(u_0,\mu_0)\in \Dcal$, one cannot in general pick the initial characteristic $y(0,\xi)=\xi$. One possible mapping from Eulerian to Lagrangian coordinates is contained in the following definition, which is a slight modification of \cite[Theorem 3.8]{HolRay2008}.
\begin{dfn}\label{def:EtoL:per}
  For any $(u,\mu)\in \Dcal$ with period $2L$, let $(y,U,H)$ on $[0, 2(L+E)]$ be given by
  \begin{subequations}\label{eq:Lag:per}
    \begin{align}
      2E&=\mu([0,2L)),\\ \label{eq:Lag:per:y}
      y(\xi)&=\sup\{ x \in [0,2L) \mid x+\mu([0,x))<\xi\},\\ \label{eq:Lag:per:U}
      U(\xi)&=u(y(\xi)), \\ \label{eq:Lag:per:H}
      H(\xi)&= \xi-y(\xi),
    \end{align}
  \end{subequations}
  and extend $(y,U,H)$ to all of $\R$ by setting 
  \begin{equation}\label{per:cond}
    y(\xi+2(L+E))=y(\xi)+2L, \enspace U(\xi+2(L+E))=U(\xi), \enspace \text{and}\enspace H(\xi+2(L+E))=H(\xi)+2E.
  \end{equation}
\end{dfn}
An advantage of this mapping is that it admits colliding characteristics, which correspond to wave breaking, and therefore allows for a rarefaction-like behavior of the characteristics as time evolves. The evolution equations yielding conservative solutions are rooted in \eqref{eq:CHcons} and \eqref{eq:CHcons:P}, and are given by
\begin{subequations}\label{eq:evol:per}
  \begin{align}
    y_t&=U,\\
    U_t&=-Q,\\
    H_t& = U^3-mU^2+m^2U-2P(U-m)\\
    &=(U-m)^3+2m(U-m)^2+2m^2(U-m)+m^3-2P(U-m), \notag
  \end{align}
\end{subequations}
where 
\begin{subequations}
  \begin{align}\label{eq:Plagper}
    P(t,\xi)&=\frac{1}{4\sinh(L)}\int_{-L-E}^{L+E} \cosh(L-\vert y(t,\xi)-y(t,\eta)\vert) ((U^2+2mU-m^2)y_\xi+H_\xi)(t,\eta) \dee \eta, \\ \label{eq:Qper}
    Q(t,\xi)& =-\frac{1}{4\sinh(L)}\int_{-L-E}^{L+E} \sgn(\xi-\eta)\sinh(L-\vert y(t,\xi)-y(t,\eta)\vert) ((U^2+2mU-m^2)y_\xi+H_\xi)(t,\eta) \dee \eta 
  \end{align}
\end{subequations}
on $[-L-E, L+E]$. The solution is then extended to all of $\R$ using \eqref{per:cond}, while $P$ and $Q$ are $2(L+E)$-periodic.

In order to return to Eulerian coordinates, one can apply the following mapping, which is taken from \cite[Theorem 3.11]{HolRay2008}.
\begin{dfn}\label{def:LtoE:per}
  Given $(y,U,H)$, define $(u,\mu)\in\Dcal$ as follows
  \begin{subequations}
    \begin{align}
      u(x)&=U(\xi) \text{ for any } \xi \text{ such that }x=y(\xi),\\
      \mu&= y_{\#}(H_\xi \dee \xi).
    \end{align}
  \end{subequations}
\end{dfn}

When studying the conservative solution for periodic stumpon initial data, we will make use of the fact that the stumpon initial data $(u_{\psi,0}, \mu_{\psi,0})=(\psi, ((\psi-m)^2+(\psi')^2)\dee x)$ is closely related to the cuspon initial data $(u_{\phi,0}, \mu_{\phi,0})=(\phi, ((\phi-m)^2+(\phi')^2)\dee x)$.
We emphasize that in both cases, the initial measures are absolutely continuous, which we remember simplifies
the conservative evolution equations \eqref{eq:CHcons} and \eqref{eq:CHcons:P}.
In Lagrangian coordinates we will denote the solutions corresponding to stumpon and cuspon initial data by
$(y_\psi, U_\psi, H_\psi)$ and $(y_\phi, U_\phi, H_\phi)$, respectively. 

We recall from Section \ref{s:travel} that, without loss of generality, we chose $\phi$ and $\psi$ to be symmetric around the origin and such that $\phi(0)=\psi(0)=s$. By definition one has the following relations between the corresponding half-periods and half-energies
\begin{equation*}
  L_\psi=L_\phi+\ell \quad \text{ and }\quad E_\psi=E_\phi+ (s-m)^2\ell.
\end{equation*}
On the other hand, the symmetry assumption yields
\begin{align*}
  \psi(x-\ell)&=\phi(x), \quad \mu_{\psi,0}(( x-\ell,0)) =\mu_{\phi,0}((x,0))+(s-m)^2\ell \quad \text{ for } -L_\phi\leq x<0,\\
  \psi( x+\ell)&=  \phi(x), \quad \mu_{\psi,0}((0, x+\ell))=\mu_{\phi,0}((0,x))+(s-m)^2\ell\quad \text{ for } 0\leq x\leq L_\phi.
\end{align*}
From definition \eqref{eq:Lag:per:y} it follows that  $y_\phi(0,0)=0$, and 
\begin{align} \label{eq:shift:y}
  \begin{aligned}
    y_\psi(0,\xi-(1+(s-m)^2)\ell)&=y_\phi(0,\xi)-\ell \quad \text{ for } -L_\phi-E_\phi \leq\xi\leq 0 , \\
    y_\psi(0,\xi+(1+(s-m)^2)\ell)&=y_\phi(0,\xi)+\ell \quad \text{ for } 0\leq \xi\leq L_\phi+E_\phi.
  \end{aligned}
\end{align}
Furthermore, recalling \eqref{eq:psi}, \eqref{eq:Lag:per:H}, and \eqref{eq:Lag:per:U} one has $H_\phi(0,0) = 0$,
as well as
\begin{subequations}\label{eq:shift:UH}
  \begin{align}
    U_\psi(0,\xi-(1+(s-m)^2)\ell)&=U_\phi(0,\xi)\\
    H_\psi(0,\xi-(1+(s-m)^2)\ell)&=H_\phi(0,\xi)- (s-m)^2\ell
  \end{align}
  for $-L_\phi-E_\phi \leq\xi \leq 0$ and  
  \begin{align}
    U_\psi(0,\xi+(1+(s-m)^2)\ell)&=U_\phi(0,\xi), \\
    H_\psi(0, \xi+(1+(s-m)^2)\ell)&=H_\phi(0,\xi) +(s-m)^2\ell
  \end{align}
\end{subequations}
for $0 \leq \xi\leq L_\phi+E_\phi$,
while on the plateau
\begin{equation}\label{eq:plat:UH}
  U_{\psi}(0,\xi)=s, \quad H_\psi(0,\xi)=\frac{(s-m)^2}{1+(s-m)^2}\xi \quad \text{ for } \xi\in (-(1+(s-m)^2)\ell, (1+(s-m)^2)\ell).
\end{equation}
In Eulerian coordinates, \eqref{eq:Ptw} yields 
\begin{equation*}
  P_\psi(0,x)=s^2-\frac12 (\psi(x)-s)^2,
\end{equation*}
which implies that $P_\psi(0,x) = s^2$ and $P_{\psi,x}(0,x)=0$ on the plateau. Furthermore, 
\begin{align*}
  P_\psi(0, x-\ell)&=P_\phi(0,x), \quad P_{\psi,x}(0,x-\ell)=P_{\phi,x}(0,x) \quad \text{ for } -L_\phi\leq x\leq 0, \\
  P_\psi(0,x+\ell)&= P_\phi(0,x), \quad P_{\psi,x}(0, x+\ell)=P_{\phi,x}(0,x) \quad \text{ for } 0\leq x\leq L_\phi.
\end{align*}
In Lagrangian coordinates this reads
\begin{subequations} \label{eq:shift:PQ}
  \begin{align}
    P_\psi(0,\xi-(1+(s-m)^2)\ell)&=P_\phi(0,\xi),\\
    Q_\psi(0,\xi-(1+(s-m)^2)\ell)&=Q_\phi(0,\xi) 
  \end{align}
  for $-L_\phi-E_\phi \leq \xi\leq 0$ and
  \begin{align}
    P_\psi(0,\xi+(1+(s-m)^2)\ell)&=P_\phi(0,\xi), \\
    Q_\psi(0,\xi+(1+(s-m)^2)\ell)&=Q_\phi(0,\xi)    
  \end{align}
\end{subequations}
for $0\leq \xi\leq L_\phi+E_\phi$,
while on the plateau we have
\begin{equation}\label{eq:plat:PQ}
  P_\psi(0,\xi)=s^2, \quad Q_\psi(0,\xi)=0 \quad \text{ for } \xi\in (-(1+(s-m)^2)\ell,(1+(s-m)^2)\ell).
\end{equation}
Combining the evolution equations \eqref{eq:evol:per} with \eqref{eq:plat:UH} and \eqref{eq:plat:PQ} we obtain
\begin{equation*}
  U_{\psi,t}(0,\cdot)=0, \quad H_{\psi,t}(0,\cdot)=-s^3+ms^2+m^2s \quad \text{ for } \xi\in (-(1+(s-m)^2)\ell, (1+(s-m)^2)\ell).
\end{equation*}
Therefore, in order to say something about the evolution of $U_{\psi}$ on the plateau
it is necessary to compute $U_{\psi,tt}(0,\cdot)=-Q_{\psi,t}(0,\cdot)$ there.

Following the proof of the existence and continuity of $Q_t$ for the cuspon with decay in \cite{Grunert2016} one can show, using \eqref{eq:Qper}, that $Q_t$ exists and is given by
\begin{align*}
  Q_t(t,\xi)&=\frac{1}{4\sinh(L)} \int_{-L-E}^{L+E}(U(t,\xi)-U(t,\eta))\cosh( L-\vert y(t,\xi)-y(t,\eta)\vert )\\
  & \qquad \qquad \qquad \qquad \qquad \qquad \qquad \qquad \times ((U^2+2mU-m^2)y_\xi+H_\xi)(t,\eta)\dee\eta\\
  & \quad -\frac{1}{4\sinh(L)}\int_{-L-E}^{L+E}\sgn(\xi-\eta) \sinh(L-\vert y(t,\xi)-y(t,\eta)\vert) (4U^2U_\xi-4UQy_\xi-2PU_\xi)(t,\eta) \dee \eta.
\end{align*}
In particular, by \eqref{eq:CHPb} and continuity, the periodic cuspon solution $\phi$ satisfies
\begin{equation*}
  Q_{\phi,t}(0,0)=\frac12 (M-s)(s-m)(s-z)\not =0.
\end{equation*}
Our aim is to use symmetries and shift identities to derive a relation between $Q_{\phi,t}(0,0)$ and $Q_{\psi,t}(0, \xi)$ for $\xi\in (-(1+(s-m)^2)\ell, (1+(s-m)^2)\ell)$.
This will in turn allow us to compute $Q_{\psi,t}(0,\xi)$ for all points on the plateau. 
For the cuspon we have 
\begin{equation}\label{eq:Qt0}
  \begin{aligned}
    Q_{\phi,t}(0,0)& =\frac{1}{4\sinh(L_\phi)} \int_{-L_\phi-E_\phi}^{L_\phi+E_\phi}(U_\phi(0,0)-U_\phi(0,\eta))\cosh( L_\phi-\vert y_\phi(0,\eta)\vert )\\
    &\qquad \qquad \qquad \qquad \qquad \qquad \qquad \qquad \times ((U_\phi^2+2mU_\phi-m^2)y_{\phi,\xi}+H_{\phi,\xi})(0,\eta)\dee\eta\\
    & \quad +\frac{1}{4\sinh(L_\phi)}\int_{-L_\phi-E_\phi}^{L_\phi+E_\phi}\sgn(\eta) \sinh(L_\phi-\vert y_\phi(0,\eta)\vert)\\
    &\qquad \qquad \qquad \qquad \qquad \qquad \qquad \qquad \times  (4U_\phi^2U_{\phi,\xi}-4U_\phi Q_\phi y_{\phi,\xi}-2P_\phi U_{\phi,\xi})(0,\eta) \dee\eta,
  \end{aligned}
\end{equation}
which can also be expressed using only integrals from $-L_\phi-E_\phi$ to $0$ or from $0$ to $L_\phi+E_\phi$ due to some symmetry properties outlined below. Indeed, note that we have  
\begin{equation*}
  \mu_{\phi,0}((-x,0))=\mu_{\phi,0}((0,x)) \quad \text{ for } x > 0.
\end{equation*} 
Furthermore, we can, in the case of a periodic cuspon $\phi$, in \eqref{eq:Lag:per}
replace $x\in [0,2L_\phi)$ with $x\in \R$  using the convention
\begin{equation*}
  \mu_{\phi,0}((0,x)) = -\mu_{\phi,0}((x,0)) \quad \text{ for } x < 0,
\end{equation*}
and still obtain the same initial data $(y_\phi(0,\cdot), U_\phi(0,\cdot), H_\phi(0,\cdot))$.
In particular, for all $k\in [-L_\phi-E_\phi, L_\phi+E_\phi]$ it follows from \eqref{eq:Lag:per:y} that
\begin{equation*}
  k=y_\phi(0,k)+\mu_{\phi,0}((0,y_\phi(0,k))),
\end{equation*} 
and \begin{equation*}
  k=-(-k)=-(y_\phi(0,-k)+\mu_{\phi,0}((0,y_\phi(0,-k))))=-y_\phi(0,-k)+\mu_{\phi,0}((0,-y_\phi(0,-k)) ).
\end{equation*}
Moreover, for $k>0$, we have $y_\phi(0,-k)<0$ and $y_\phi(0,k)>0$, while the function $x+\mu_{\phi,0}((0,x))$ is strictly increasing, which implies for all $k\in [-L_\phi-E_\phi, L_\phi+E_\phi]$ that
\begin{subequations}\label{eq:sym}
  \begin{equation}\label{eq:sym:y}
    y_\phi(0,-k) = -y_\phi(0,k).
  \end{equation}
  Combining \eqref{eq:sym:y} with \eqref{eq:Lag:per:U}, \eqref{eq:Lag:per:H} we find
  \begin{equation}\label{eq:sym:UH}
    U_\phi(0,-k)=U_\phi(0,k), \qquad H_\phi(0,-k)=-H_\phi(0,k).
  \end{equation}
  Finally, \eqref{eq:sym:y}, \eqref{eq:sym:UH}, and the symmetry properties inherited by their derivatives
  can be used in \eqref{eq:Plagper} and \eqref{eq:Qper} to obtain
  \begin{equation}\label{eq:sym:PQ}
    P_\phi(0,-k)=P_\phi(0,k), \qquad Q_\phi(0,-k)=-Q_\phi(0,k).
  \end{equation}
\end{subequations}
The symmetries in \eqref{eq:sym} can then be applied to rewrite \eqref{eq:Qt0} as
\begin{align*}
  Q_{\phi,t}(0,0)& =\frac{1}{2\sinh(L_\phi)} \int_{-L_\phi-E_\phi}^{0}(U_\phi(0,0)-U_\phi(0,\eta))\cosh( L_\phi-\vert y_\phi(0,\eta)\vert )\\
  &\qquad \qquad \qquad \qquad \qquad \qquad \qquad \qquad \times ((U_\phi^2+2mU_\phi-m^2)y_{\phi,\xi}+H_{\phi,\xi})(0,\eta)\dee \eta\\
  & \quad -\frac{1}{2\sinh(L_\phi)}\int_{-L_\phi-E_\phi}^{0}\sinh(L_\phi-\vert y_\phi(0,\eta)\vert) (4U_\phi^2U_{\phi,\xi}-4U_\phi Q_\phi y_{\phi,\xi}-2P_\phi U_{\phi,\xi})(0,\eta) \dee \eta\\
  &=\frac{1}{2\sinh(L_\phi)} \int_{0}^{L_\phi+E_\phi}(U_\phi(0,0)-U_\phi(0,\eta))\cosh( L_\phi-\vert y_\phi(0,\eta)\vert )\\
  &\qquad \qquad \qquad \qquad \qquad \qquad \qquad \qquad \times ((U_\phi^2+2mU_\phi-m^2)y_{\phi,\xi}+H_{\phi,\xi})(0,\eta)\dee \eta\\
  & \quad +\frac{1}{2\sinh(L_\phi)}\int_{0}^{L_\phi+E_\phi}\sinh(L_\phi-\vert y_\phi(0,\eta)\vert) (4U_\phi^2U_{\phi,\xi}-4U_\phi Q_\phi y_{\phi,\xi}-2P_\phi U_{\phi,\xi})(0,\eta) \dee \eta.
\end{align*}
Returning to the stumpon $\psi$ on the plateau, i.e., where $\xi\in (-(1+(s-m)^2)\ell, (1+(s-m)^2)\ell)$, we get 
\begin{align*}
  Q_{\psi,t}(0,\xi)&=\frac{1}{4\sinh(L_\psi)} \int_{-L_\psi-E_\psi}^{L_\psi+E_\psi}(U_\psi(0,\xi)-U_\psi(0,\eta))\cosh( L_\psi-\vert y_\psi(0,\xi)-y_\psi(0,\eta)\vert )\\
  &\qquad \qquad \qquad \qquad \qquad \qquad \qquad \qquad \times ((U_\psi^2+2mU_\psi-m^2)y_{\psi,\xi}+H_{\psi,\xi})(0,\eta) \dee \eta\\
  & \quad +\frac{1}{4\sinh(L_\psi)}\int_{-L_\psi-E_\psi}^{L_\psi+E_\psi}\sgn(\eta) \sinh(L_\psi-\vert y_\psi(0,\xi)-y_\psi(0,\eta)\vert) \\
  &\qquad \qquad \qquad \qquad \qquad \qquad \qquad \qquad \times (4U_\psi^2U_{\psi,\xi}-4U_\psi Q_\psi y_{\psi,\xi}-2P_\psi U_{\psi,\xi})(0,\eta) \dee \eta\\
  &=\frac{1}{4\sinh(L_\psi)} \int_{-L_\phi-E_\phi}^{L_\phi+E_\phi}(U_\phi(0,0)-U_\phi(0,\eta))\cosh( L_\phi-\sgn(\eta)( y_\psi(0,\xi)-y_\phi(0,\eta) )\\
  &\qquad \qquad \qquad \qquad \qquad \qquad \qquad \qquad \times ((U_\phi^2+2mU_\phi-m^2)y_{\phi,\xi}+H_{\phi,\xi})(0,\eta)\dee \eta\\
  & \quad +\frac{1}{4\sinh(L_\psi)}\int_{-L_\phi-E_\phi}^{L_\phi+E_\phi}\sgn(\eta) \sinh(L_\phi-\sgn(\eta)( y_\psi(0,\xi)-y_\phi(0,\eta)))\\
  &\qquad \qquad \qquad \qquad \qquad \qquad \qquad \qquad \times  (4U_\phi^2U_{\phi,\xi}-4U_\phi Q_\phi y_{\phi,\xi}-2P_\phi U_{\phi,\xi})(0,\eta) \dee \eta.
\end{align*}
Here we have used that the contribution from the plateau, i.e., $\abs{\eta} < (1+(s-m)^2)\ell$, vanishes because of \eqref{eq:plat:UH} and \eqref{eq:plat:PQ}.
Thereafter, we have applied the shift identities \eqref{eq:shift:y}, \eqref{eq:shift:UH}, and \eqref{eq:shift:PQ} to obtain an integral
over $[-L_\phi-E_\phi, L_\phi+E_\phi]$. Taking into account once more the symmetries \eqref{eq:sym} and using identities for hyperbolic functions we end up with 
\begin{align}\nonumber
  Q_{\psi,t}(0,\xi)
  &=\frac{\cosh(y_\psi(0,\xi))}{4\sinh(L_\psi)} \left[  \int_{-L_\phi-E_\phi}^{L_\phi+E_\phi}(U_\phi(0,0)-U_\phi(0,\eta))\cosh( L_\phi-\vert y_\phi(0,\eta)\vert )\right.\\ \nonumber
  &\qquad \qquad \qquad \qquad \qquad \qquad \qquad \qquad \times \left.((U_\phi^2+2mU_\phi-m^2)y_{\phi,\xi}+H_{\phi,\xi})(0,\eta)\dee \eta \right. \\ \nonumber
  & \quad+ \left. \int_{-L_\phi-E_\phi}^{L_\phi+E_\phi}\sgn(\eta) \sinh(L_\phi-\vert y_\phi(0,\eta)\vert) (4U_\phi^2U_{\phi,\xi}-4U_\phi Q_\phi y_{\phi,\xi}-2P_\phi U_{\phi,\xi})(0,\eta) \dee\eta \right] \\ \nonumber
  &= \frac{\sinh(L_\phi)\cosh(y_\psi(0,\xi))}{\sinh(L_\psi)}Q_{\phi,t}(0,0)\\   \label{eq:Qt:per}
  & = \frac{\sinh(L_\phi)\cosh(y_\psi(0,\xi))}{2\sinh(L_\phi+\ell)} (M-s)(s-m)(s-z),
\end{align}
where we recall that $L_\psi = L_\phi + \ell$.
The quantity $Q_{\psi,t}$ given by \eqref{eq:Qt:per} is either positive or negative, depending on whether we have an upward- or a downward-pointing stumpon initially,
i.e., whether the stumpon is ``constructed'' from a cuspon of type (e) or (e'), respectively, cf. Section~\ref{s:travel}.

Assume that we have an upward-pointing stumpon profile $\psi$ as initial data, i.e., the parameters satisfy $z<m<s<M$.
By the choice of the initial characteristics \eqref{eq:Lag:per:y}, we have that $y_\psi(0,\xi)=y_\psi(0,-\xi)$ for $\xi\in \R$ and $y_\psi(0,\cdot)$ is strictly increasing. Thus $Q_{\psi,t}(0,\xi)$ is strictly positive and continuous on the plateau,
and in particular it is strictly decreasing on $(-(1+(s-m)^2)\ell,0)$ and strictly increasing on $(0,(1+(s-m)^2)\ell)$. Furthermore, $Q_{\psi,t}(t,\xi)$ is continuous with respect to both time and space, so that we can write
\begin{align*}
  U_\psi(t,\xi)&=U_\psi(0,\xi)-tQ_\psi(0,\xi)-\frac{t^2}{2}Q_{\psi,t}(0,\xi)+\mathcal{O}(t^3)\\
  & = s-\frac{t^2}{2} Q_{\psi,t}(0,\xi)+\mathcal{O}(t^3),
\end{align*}
which implies that along characteristics, i.e., in Lagrangian coordinates, the plateau turns into a graph with a
hyperbolic cosine-like shape for small times, which in addition is moved downwards compared to the initial plateau height.
The numerical simulation in Figure \ref{fig:stumpon2} highlights this behavior.
We remark that in Eulerian coordinates, the shape will in general be influenced by the parametrization used for the
characteristics when going back from Lagrangian variables in accordance with Definition \ref{def:LtoE:per}.

Furthermore, a closer look at Figure~\ref{fig:stumpon2} reveals that the numerical solution is strictly increasing to the left
of the former plateau, while there emerges a small peak to the right of it. An explanation for this behavior is based on the prediction of wave breaking, which has been discussed in detail for the non-periodic setting in \cite[Theorem 1.1]{Grunert2015}.
We will briefly summarize the moral of the story: To the right of the plateau the initial data is strictly decreasing with an excessively negative slope. Thus one expects the solution to break along characteristics close and to the right of the plateau in the nearby future, and in particular the wave profile changes from decreasing to increasing, giving rise to the small spike. On the other hand, the initial data to the left of the plateau is strictly increasing with an especially positive slope.
This indicates that wave breaking occurred recently along characteristics close to the plateau and in particular the function cannot change from decreasing to increasing anytime soon. Hence there cannot turn up a spike to the left of the plateau.

\section{Conservative and non-conservative traveling waves on the real line} \label{s:cons:dec}
In this final section, we want to show that the results obtained in the periodic case also carry over to the cuspon and stumpon with decay. Since both the cuspon and the stumpon with decay have nonvanishing asymptotics, we have to combine results from \cite{HolRay2007cons, HolRay2007hyp,GruHolRay2012} to be able to apply the same methods as in the periodic case with only slight modifications.

On the real line, every conservative solution is given through a pair $(u,\mu)$ such that $(u(t),\mu(t))\in \Dcal$ for all $t\in \R$, where 
\begin{equation}
  \Dcal :=\{(u,\mu)\mid (u-m)\in H^1(\R), \enspace \mu\in \Mcal^+, \enspace \mu_\text{ac}=((u-m)^2+u_x^2)\dee x\}.
\end{equation}
Here $m$ denotes a constant, $\Mcal^+$ denotes the set of positive and finite Radon measures on $\R$, while $\mu_\text{ac}$ denotes the absolutely continuous part of a measure $\mu$. 

The system describing weak, conservative solutions reads
\begin{subequations}
  \begin{align}
    u_t+uu_x&=-P_x\\
    \mu_t+(u\mu)_x& =(u^3-mu^2+m^2u-2P(u-m))_x\\
    &=((u-m)^3+2m(u-m)^2+2m^2(u-m)+m^3-2P(u-m))_x,
  \end{align}
\end{subequations}
where 
\begin{equation}
  P(t,x)=\frac14 \int_\R \e^{-\vert x-z\vert} (u^2+2mu-m^2)(t,z) \dee z+\frac14 \int_\R \e^{-\vert x-z\vert} \dee\mu(t,z).
\end{equation}
As in the periodic case, the question arises whether or not cuspons and stumpons are conservative solutions in the sense of
\cite{GruHolRay2012}. 

The proof of Theorem~\ref{thm:noncon} does not rely on the periodicity, but only on the parameters $m$, $s$, and $M$ (recall that $z$ depends on the three other parameters). Thus keeping in mind that the only difference in parameters is that $z=m$ for the cuspon and stumpon with decay, one can follow the same lines to obtain the following result.

\begin{thm}
  Stumpons with decay, unlike cuspons with decay, are not conservative solutions of the Camassa--Holm equation, since they do not satisfy the conservation law \eqref{eq:claw} in the weak sense.
\end{thm}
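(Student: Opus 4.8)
The plan is to reuse, essentially verbatim, the Rankine--Hugoniot-type argument from the proof of Theorem~\ref{thm:noncon}, since that argument is purely local around a gluing line and relies only on the parameters $m$, $s$, $M$ together with the relation $a = s^2$ from \eqref{eq:a:stumpon}. The only structural differences in the decaying case are that $z = m$ (because the cuspon-with-decay parameters satisfy $z + m = 2m = s - M$), and that the stumpon $u(t,x) = \psi(x-st)$ from \eqref{eq:psi} has exactly two gluing lines, $\gamma_l \colon t \mapsto -\ell + st$ and $\gamma_r \colon t \mapsto \ell + st$, with no periodic copies to account for.

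First I would fix a point on $\gamma_l$ and pick a bounded neighborhood $D$ containing no part of $\gamma_r$, split it as $D = D_1 \cup (\gamma_l \cap D) \cup D_2$ with $D_2$ on the plateau side, and introduce $D_i^\varepsilon$, $\gamma_i^\varepsilon$, $I_i^\varepsilon$, and $I_l$ exactly as in the periodic proof. Assuming $\psi$ satisfies \eqref{eq:claw} weakly, one tests against $\Phi$ supported in $D$, writes the area integral over $D$ as $\lim_{\varepsilon\to 0}(\iint_{D_1^\varepsilon} + \iint_{D_2^\varepsilon})$, uses that $u$ is smooth on each $D_i^\varepsilon$ so that \eqref{eq:claw} holds there pointwise, and applies Green's theorem together with $(\gamma_i^\varepsilon)'(t) = s$ to reduce each piece to a boundary integral along $\gamma_i^\varepsilon$, namely $\int_{I_i^\varepsilon} (u_x^2(u-s) - su^2 + 2Pu)\,\Phi(t,\gamma_i^\varepsilon(t))\dee t$, with $\Phi$ vanishing on $\partial D_i^\varepsilon \setminus \gamma_i^\varepsilon$.

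Next I would let $\varepsilon \to 0$. On the cuspon side $D_1^\varepsilon$, identity \eqref{eq:dphi2} with $z = m$ gives $u_x^2(u-s) = (\psi')^2(\psi - s) = -(M-\psi)(\psi-m)^2$, which extends continuously up to $\gamma_l$ with boundary value $-(M-s)(s-m)^2$ since $\psi \to s$ there; combined with the continuity of $u$, Proposition~\ref{prp:P}, and $a = s^2$, this yields $u(t,\gamma_l(t)) = s$ and $P(t,\gamma_l(t)) = s^2$, so the $D_1$-contribution tends to $\left(-(M-s)(s-m)^2 + s^3\right)\int_{I_l} \Phi(t,\gamma_l(t))\dee t$. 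On the plateau side $u \equiv s$ and $P \equiv s^2$, so the integrand equals $-s^3$, and the opposite orientation gives a $D_2$-contribution of $-s^3 \int_{I_l}\Phi(t,\gamma_l(t))\dee t$. Summing, the weak form of \eqref{eq:claw} forces $(M-s)(s-m)^2 \int_{I_l}\Phi(t,\gamma_l(t))\dee t = 0$ for every admissible $\Phi$, hence $(M-s)(s-m)^2 = 0$, contradicting $m < s < M$ for a cuspon of type (f); the same computation at $\gamma_r$, and the analogous one for type (f') with $m > s > M$, give the same contradiction. For the cuspon with decay there is no gluing line bordering a plateau and the only singular line is $x = st$; splitting a neighborhood of a point on it into left and right halves, both carrying the smooth cuspon, the two boundary contributions along $x = st$ carry the same integrand value $-(M-s)(s-m)(s-z) - su^2 + 2Pu$ but opposite orientations and hence cancel, so the cuspon with decay does satisfy \eqref{eq:claw} weakly, as claimed in the ``unlike cuspons with decay'' part.

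The only point requiring genuine care beyond transcribing the periodic proof is the passage $\varepsilon \to 0$ near the singular set: one must observe that although $u_x^2$ blows up as $\gamma_l$ is approached, the product $u_x^2(u-s)$ stays bounded (indeed extends continuously), and that $u_x^2 \in L^1_\text{loc}$ --- which follows from \ref{enum:tw3}, i.e.\ $(s-\psi)^2 \in W^{2,1}_\text{loc}$, or directly from the cuspon asymptotics $\psi(x) - s = \Ocal(\abs{x}^{2/3})$ and the absolute continuity of the initial energy --- so that the area integral over $D$ is well defined and splits into the two one-sided limits. The geometry of $\partial D_i^\varepsilon$ and the orientation bookkeeping are identical to the proof of Theorem~\ref{thm:noncon}. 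Finally, that failing \eqref{eq:claw} weakly precludes being a conservative solution in the sense of \cite{GruHolRay2012} is immediate, since such solutions satisfy \eqref{eq:claw} weakly by construction.
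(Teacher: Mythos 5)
Your proposal is correct and follows essentially the same route as the paper, which establishes this theorem precisely by rerunning the Rankine--Hugoniot argument of Theorem~\ref{thm:noncon} at the gluing lines $\gamma_l$, $\gamma_r$ with $z=m$, arriving at the obstruction $(M-s)(s-m)^2\neq 0$. Your additional remarks on the local integrability of $u_x^2$ near the singular lines and on the cancellation of the two boundary contributions at the cusp line for the decaying cuspon are correct elaborations of details the paper leaves implicit.
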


Next we want to investigate how the conservative solution with stumpon initial data looks like. We will show the following result.
\begin{prp}
  Let $\psi$ be the decaying wave profile of a traveling wave stumpon with a plateau of positive width. Then, for the conservative solution of the Camassa--Holm equation with initial data $u_0(x)=\psi(x)$, the plateau will not persist.
\end{prp}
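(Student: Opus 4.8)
The plan is to reiterate the proof of Proposition~\ref{prp:platfall} with the modifications forced by the non-periodic setting. The first task is to set up the method of characteristics on the real line for data with nonvanishing asymptotics: since both the decaying cuspon $\phi$ and the decaying stumpon $\psi$ tend to the same constant $m$ at $\pm\infty$, the appropriate framework is obtained by combining the change of variables of \cite{HolRay2007hyp} with the conservative construction of \cite{HolRay2007cons, GruHolRay2012}, so that the total energy $\mu_\text{ac}(\R) = \int_\R((u-m)^2+u_x^2)\dee x$ is finite. In these variables the Lagrangian evolution system and the representation formulas for $P$ and $Q$ have exactly the structure of \eqref{eq:evol:per}--\eqref{eq:Qper}, except that the label variable now runs over all of $\R$ and the periodic kernels $\frac{1}{2\sinh L}\cosh(L-\abs{\cdot})$, $\frac{1}{2\sinh L}\sinh(L-\abs{\cdot})$ are replaced by the real-line kernels $\frac12\e^{-\abs{\cdot}}$. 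The constant $m$ was carried through Section~\ref{s:cons:per} precisely so that the analogues of Definitions~\ref{def:EtoL:per} and \ref{def:LtoE:per} and of all subsequent identities transcribe with only cosmetic changes.

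Next I would record the decaying analogues of the shift and symmetry identities \eqref{eq:shift:y}--\eqref{eq:plat:PQ}. Writing $E_\phi$ and $E_\psi$ for the (now finite) half-energies, the relation $\psi(x) = \phi(\abs{x}-\ell)$ expresses that the stumpon data is obtained from the cuspon data by inserting, symmetrically around the origin, a plateau of height $s$ carrying energy $(s-m)^2\ell$ on each side. As in the periodic case this yields shifts $y_\psi(0,\xi\pm(1+(s-m)^2)\ell) = y_\phi(0,\xi)\pm\ell$ and $H_\psi(0,\xi\pm(1+(s-m)^2)\ell) = H_\phi(0,\xi)\pm(s-m)^2\ell$ with $U$ unchanged, together with $U_\psi = s$, $H_{\psi,\xi}$ constant, $P_\psi = s^2$, $Q_\psi = 0$ on the plateau, and the reflection identities \eqref{eq:sym} in $-k$. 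As in Section~\ref{s:cons:per}, the initial measures for both $\phi$ and $\psi$ are absolutely continuous, so the conservative evolution is the plain one, governed by \eqref{eq:CHP} and \eqref{eq:claw}; note also that the proof of Theorem~\ref{thm:noncon} already did not use periodicity.

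With this in hand I would, following the argument of \cite{Grunert2016} for the decaying cuspon, establish that $Q_t$ exists and is continuous and is given by the real-line counterpart of the $Q_t$-formula displayed in Section~\ref{s:cons:per} (kernels $\frac12\e^{-\abs{\cdot}}$, integration over $\R$), and evaluate it on the plateau. Since $U_\psi - s$, $P_\psi - s^2$, $Q_\psi$ and $U_{\psi,\xi}$ all vanish there, the plateau labels contribute nothing to the integrals, and the shift and symmetry identities turn the integral over the rest into a multiple of the corresponding cuspon integral; exactly as in the passage to \eqref{eq:Qt:per}, everything collapses to
\[
  Q_{\psi,t}(0,\xi) = g\!\left(y_\psi(0,\xi)\right)\, Q_{\phi,t}(0,0)
  \quad\text{with}\quad Q_{\phi,t}(0,0) = \frac12(M-s)(s-m)(s-z),
\]
where $g$ is an explicit, strictly positive, continuous function on the plateau (structurally the decaying-kernel analogue of $\xi\mapsto\sinh(L_\phi)\cosh(\xi)/\sinh(L_\psi)$; note $\abs{y_\psi(0,\xi)}<\ell$ while $\abs{y_\psi(0,\eta)}\ge\ell$ off the plateau, which is what makes the $y_\psi(0,\xi)$-dependence factor out). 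Since $z = m$ for the decaying cuspon, $Q_{\phi,t}(0,0) = \frac12(M-s)(s-m)^2 \neq 0$, with sign determined by whether we are in case (f), $m<s<M$, or case (f'), $m>s>M$.

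Finally, since $U_{\psi,t}(0,\cdot) = -Q_\psi(0,\cdot) = 0$ but $U_{\psi,tt}(0,\cdot) = -Q_{\psi,t}(0,\cdot) \neq 0$ on the plateau, the Taylor expansion $U_\psi(t,\xi) = s - \frac{t^2}{2}Q_{\psi,t}(0,\xi) + \Ocal(t^3)$ shows that at every label of the former plateau the solution leaves the value $s$ for small $t\neq 0$; hence the plateau cannot persist, descending when $m<s<M$ and ascending when $m>s>M$. I expect the main obstacle to lie not in this final computation but in the careful justification that the real-line Lagrangian machinery --- the change of variables accommodating the constant asymptotics $m$, the existence and continuity of $Q_t$, and the shift and reflection identities --- genuinely carries over from \cite{HolRay2007cons, HolRay2007hyp, GruHolRay2012, Grunert2016} with only the indicated modifications; once that bookkeeping is settled, the remainder is a transcription of the periodic argument.
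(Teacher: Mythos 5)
Your proposal follows essentially the same route as the paper: set up the real-line Lagrangian machinery for nonvanishing asymptotics, transcribe the shift and symmetry identities, use the $Q_t$ formula and the value $Q_{\phi,t}(0,0)=\tfrac12(M-s)(s-m)^2\neq 0$ from \cite{Grunert2016}, factor out the plateau dependence (the paper computes your $g$ explicitly as $\e^{-\ell}\cosh(y_\psi(0,\xi))$, consistent with your structural description), and conclude via the Taylor expansion of $U_\psi$. The argument is correct and matches the paper's outline.
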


Again the proof relies heavily on the method of characteristics as introduced in \cite{HolRay2007hyp} and the ideas from the periodic case. Instead of presenting all the details we will give an outline, which uses representations that can be seen as the limit of the formulas from the periodic case, when $L_\phi\to \infty$ or equivalently $z\to m$.

\begin{dfn}
  For any $(u,\mu)\in \Dcal$, let $(y,U,H)$ on $\R$ be given by
  \begin{subequations}
    \begin{align}\label{eq:Lag:nper:y}
      y(\xi)&=\sup\{ x\in \R\mid x+\mu((-\infty,x))<\xi+\mu((-\infty,0])\},\\
      U(\xi)&= u(y(\xi)),\\
      H(\xi)&=\xi-y(\xi).
    \end{align}
  \end{subequations}
\end{dfn}

The evolution equations yielding conservative solutions are then given by
\begin{subequations}
  \begin{align}
    y_t&=U,\\
    U_t&=-Q,\\
    H_t& = U^3-mU^2+m^2U-2P(U-m)\\
    &= (U-m)^3+2m(U-m)^2+2m^2(U-m)+m^3-2P(U-m),
  \end{align}
\end{subequations}
where 
\begin{subequations}
  \begin{align}
    P(t,\xi)&=\frac14 \int_\R \e^{-\vert y(t,\xi)-y(t,\eta)\vert} ((U^2+2mU-m^2)y_\xi+H_\xi)(t,\eta) \dee\eta,\\
    Q(t,\xi)&= -\frac14 \int_\R \sgn(\xi-\eta)\e^{-\vert y(t,\xi)-y(t,\eta)\vert} ((U^2+2mU-m^2)y_\xi+H_\xi)(t,\eta) \dee\eta.
  \end{align}
\end{subequations}

Going back to Eulerian coordinates, one can apply the following mapping, which is taken from \cite{GruHolRay2012}.
\begin{dfn}
  Given $(y,U,H)$, define $(u,\mu)\in \Dcal$ as follows
  \begin{align*}
    u(x)&=U(\xi) \text{ for any } \xi \text{ such that } x=y(\xi),\\
    \mu&=y_{\#}(H_\xi \dee \xi).
  \end{align*}
\end{dfn}

Following the same lines as in the last section, one obtains that the equalities \eqref{eq:shift:y}--\eqref{eq:plat:PQ} also hold for the stumpon and the cuspon with decay, if one replaces the interval $-L_\phi-E_\phi\leq \xi\leq 0$ and $0\leq \xi\leq L_\phi+E_\phi$ with $\xi\leq 0$ and $0\leq \xi$, respectively. 

Following once more the proof of existence and continuity of $Q_t$ for the cuspon with decay in \cite{Grunert2016}, one can show that $Q_t$ exists and is given by 
\begin{align*}
  Q_t(t,\xi)&=\frac14 \int_\R (U(t,\xi)-U(t,\eta))\e^{-\vert y(t,\xi)-y(t,\eta)\vert}((U^2+2mU-m^2)y_\xi+H_\xi)(t,\eta)\dee\eta\\
  & \quad - \frac14 \int_\R \sgn(\xi-\eta) \e^{-\vert y(t,\xi)-y(t,\eta)\vert} (4U^2U_\xi-4UQy_\xi-2PU_\xi)(t,\eta) \dee\eta.
\end{align*}
In particular, one has, cf.\ \cite{Grunert2016}, that the cuspon with decay satisfies 
\begin{equation*}
  Q_{\phi,t}(0,0)=\frac12 (M-s)(s-m)^2\not =0.
\end{equation*}
Again we would like to find a relation between $Q_{\phi,t}(0,0)$ and $Q_{\psi,t}(0,\xi)$ for $\xi \in (-(1+(s-m)^2)\ell, (1+(s-m)^2)\ell)$. In this case we note that the symmetry properties \eqref{eq:sym} are valid for $k\in \R$ and we can therefore write
\begin{align*}
  Q_{\phi,t}(0,0)&= \frac12 \int_{-\infty}^0 (U_\phi(0,0)-U_\phi(0,\eta))\e^{ y_\phi(0,\eta)}((U_\phi^2+2mU_\phi-m^2)y_{\phi,\xi}+H_{\phi,\xi})(0,\eta)\dee\eta\\
  & \quad - \frac12 \int_{-\infty}^0  \e^{y_\phi(0,\eta)} (4U_\phi^2U_{\phi,\xi}-4U_\phi Q_\phi y_{\phi,\xi}-2P_\phi U_{\phi,\xi})(0,\eta) \dee\eta\\
  & =  \frac12 \int_0^\infty (U_\phi(0,0)-U_\phi(0,\eta)) \e^{-y_\phi(0,\eta)}((U_\phi^2+2mU_\phi-m^2)y_{\phi,\xi}+H_{\phi,\xi})(0,\eta)\dee\eta\\
  & \quad - \frac12 \int_0^\infty  \e^{-y_\phi(0,\eta)} (4U_\phi^2U_{\phi,\xi}-4U_\phi Q_\phi y_{\phi,\xi}-2P_\phi U_{\phi,\xi})(0,\eta) \dee\eta.
\end{align*}
Carrying out the same calculations as in the periodic case with only slight modifications, we end up with 
\begin{align}\nonumber
  Q_{\psi,t}(0,\xi)& = \e^{-\ell}\cosh(y_\psi(0,\xi))Q_{\phi,t}(0,0)\\ \label{eq:Qt:line}
  & = \frac{\e^{-\ell}\cosh(y_\psi(0,\xi))}{2}(M-s)(s-m)^2,
\end{align}
which is either positive or negative, depending on whether we have an upward- or a downward-pointing stumpon initially,
i.e., whether the stumpon is ``constructed'' from a cuspon of type (f) or (f'), respectively.

We underline that it is not only the above calculations which are very similar for the stumpon with decay and the periodic stumpon.
As mentioned earlier, the decaying stumpon can be seen as the limit of the periodic case, by keeping $\ell$ fixed in $L_\psi = L_\phi+\ell$ and either letting $L_\phi \to \infty$ or $z\to m$.
To be precise, we consider the half-period $L_\phi$ for the cuspon given by \eqref{eq:L:cusp}.
If $L_\phi$ is to be unbounded, the final integral must diverge, but for all cuspons one has $\abs{M-\phi}^{-1} \ge \abs{M-s}^{-1} > 0$. Therefore, the only way this can happen is to have $z \to m$. Furthermore, we can fix a point $x$ away from the plateau so that the identity \eqref{eq:dphi2Cper} holds.
Then, as sending $L_\phi \to \infty$ sends $z \to m$, we of course obtain \eqref{eq:dphi2Cdec},
so the slope of the wave profile must also agree in the limit.

There are also other quantities for the two types of stumpons which agree in the limit.
Recall that we can write $Q_{\psi,t}(0,\xi)$ on the plateau in the periodic case, cf. \eqref{eq:Qt:per}, as 
\begin{align*}
  Q_{\psi,t}(0,\xi)&=\frac12\frac{\e^{L_\phi}-\e^{-L_\phi}}{\e^{L_\phi+\ell}-\e^{-L_\phi-\ell}}\cosh(y_\psi(0,\xi))(M-s)(s-m)(s-z)\\
  & = \frac12 \e^{-\ell} \frac{1-\e^{-2L_\phi}}{1-\e^{-2L_\phi-2\ell}}\cosh(y_\psi(0,\xi)) (M-s)(s-m)(s-z).
\end{align*}
This expression tends to \eqref{eq:Qt:line} as $L_\phi \to \infty$, $z \to m$, and  the periodic initial characteristics $y_\psi(0,\xi)$ given through \eqref{eq:Lag:per:y} tend to the non-periodic ones defined by \eqref{eq:Lag:nper:y}. Thus one can expect the plateau to
evolve rather similarly for the two cases.

To summarize, from a qualitative point of view the behavior of the weak conservative solution for periodic and decaying stumpon initial data should be rather similar. 

\subsection*{Acknowledgments:}
The authors thank Jonatan Lenells,
whose seemingly innocuous question led to the discovery of these results.


\end{document}